\tikzstyle{decision} = [diamond, draw, fill=blue!20, 
\tikzstyle{block} = [rectangle, draw, fill=blue!20, 
\tikzstyle{line} = [draw, -latex']
\tikzstyle{cloud} = [draw, ellipse,fill=red!20, node distance=3cm,
\tikzset{main node/.style={circle,fill=blue!20,draw,minimum size=1cm,inner sep=0pt},  }
\DeclareMathOperator*{\argmin}{argmin}
\DeclareMathOperator*{\argmax}{argmax}
\newcommand{\overlinetmp}[1]{[#1]}
\begin{document}
\title[]{Computational Mean-field information dynamics associated with reaction-diffusion equations} 
\author[Li]{Wuchen Li}
\email{wuchen@mailbox.sc.edu}
\address{Department of Mathematics, University of South Carolina, Columbia }

\author[Lee]{Wonjun Lee}
\email{wlee@math.ucla.edu}
\address{Department of Mathematics, University of California, Los Angeles}

\author[Osher]{Stanley Osher}
\email{sjo@math.ucla.edu}
\address{Department of Mathematics, University of California, Los Angeles}
\newcommand{\vr}{\overrightarrow}
\newcommand{\wt}{\widetilde}
\newcommand{\dd}{\mathcal{\dagger}}
\newcommand{\ts}{\mathsf{T}}
\newcommand{\wc}[1]{{\color{blue} [li: #1]}}

\keywords{Reaction-diffusion equations; Onsager principle; Optimal transport; Information geometry; Mean-field games; Primal-dual hybrid gradient algorithms; {Implicit schemes}.}
\thanks{This research is supported by AFOSR MURI FA9550-18-1-0502, ONR grants N000142012093 and N000141812527.}
\begin{abstract}
We formulate and compute a class of mean-field information dynamics for reaction-diffusion equations. Given a class of nonlinear reaction-diffusion equations and entropy type Lyapunov functionals, we study their gradient flows formulations with 
generalized optimal transport metrics and mean-field control problems. {We apply the primal-dual hybrid gradient algorithm to compute the mean-field control problems with potential energies. A byproduct of the proposed method contains a new and efficient variational scheme for solving implicit in time schemes of mean-field control problems.} Several numerical examples demonstrate the solutions of mean-field control problems. 
\end{abstract}
\maketitle
\section{Introduction}
Metrics \cite{AGS} are essential in mathematical physics equations with applications in scientific computing and Bayesian sampling problems. One popular example in this field is the optimal transport metric, a.k.a. Wasserstein metric \cite{AGS, Villani2009_optimal}, defined in probability density space. The study of gradient flow in optimal transport metric space has found applications in Markov-chain-Monte-Carlo (MCMC) methods. An example is that the heat flow is the gradient descent flow of negative Boltzman-Shannon entropy in Wasserstein space \cite{otto2001}. 
In addition, the optimal control problems and differential games in Wasserstein space are known as mean-field control problems and mean-field games, respectively \cite{MFGC,LL,MRP}. Moreover, all the above variational formulations are useful in modeling a large number of particles simultaneously, which are essential in modeling inverse problems, AI inference, and optimization problems {\cite{GL,GL1,AHLS,LiY,ALOG}}. 

Recently, generalizations of optimal transport metrics and gradient flows have been studied by \cite{C1, O1, LiY, MRP}; see many references therein. They are helpful in studying nonlinear diffusion equations. Meanwhile, information geometry has been using the Fisher-Rao information metric, which can be applied to study the pure reaction equations, arisen in population games, AI inference, and modeling \cite{IG}. A ``linear'' combination of information geometry and optimal transport metrics has been applied to the study of reaction-diffusion equations \cite{AM1, AM}. One typical example is the unbalanced optimal transport metric \cite{CPSV,SV,LMS}. In this direction, \cite{GLOP, LLLO} also propose generalized optimal transport distances in unnormalized density space. 

This paper introduces numerical schemes for general metric spaces and mean-field control problems for reaction-diffusion equations. Given a Lyapunov functional (entropy) and a nonlinear reaction-diffusion equation, we study a metric space in which the given reaction-diffusion equation forms a gradient flow. In the designed metric space, we derive mean-field Hamiltonian flows and Hamilton-Jacobi equations in positive density space. They are derived from mean-field optimal control problems of nonlinear reaction-diffusion equations. { We call these equations {\em mean-field information dynamics}.} We then design primal-dual algorithms to compute the proposed dynamics. Numerically, an additional potential energy is also proposed to improve the convexity of the problem. We apply Newton's method to compute the sub-optimization problems in mean-field control problems. { Our mean-field control problem provides a variational formulation for implicit schemes of mean-field information dynamics. And the primal-dual hybrid gradient algorithm solves the implicit scheme with a simple component by component update.} The flowchart is summarized in the above figure. 
\begin{figure}
{\includegraphics[scale=0.25]{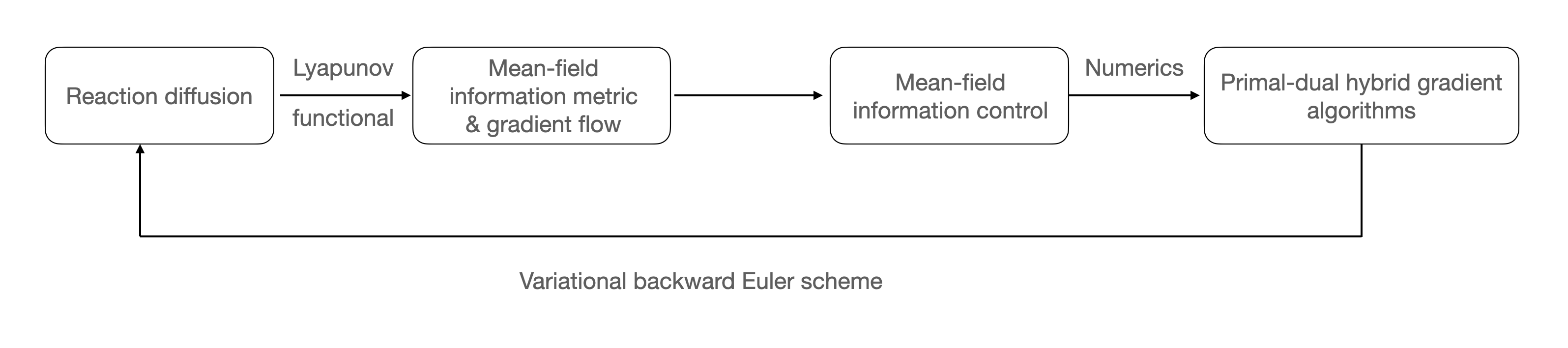}}
\label{flowchart}
\end{figure}

Various gradient flows have been studied in \cite{GZW,AM1,AM}; see many references therein. These formulations are motivated and derived from the Onsager principle. The principle is used to derive many evolution equations from soft matter physics and chemistry \cite{ON}. 
{This work focuses on both modeling and computational formalism for gradient flows, such as reaction-diffusion equations. We study mean-field control problems and generalized optimal transport metrics for reaction-diffusion equations. By using primal-dual hybrid gradient algorithms, we can efficiently compute the mean-field control problems implicitly in time. It also has a potential to compute reaction-diffusion equations implicit in time; see similar studies in \cite{LLO}.}

The paper is organized as follows. In section \ref{sec2}, we briefly review both gradient flows and Hamiltonian flows in a finite-dimensional Euclidean space. In section \ref{sec3}, we introduce the mean-field information distance in positive density space, in which we formulate both gradient flows and Hamiltonian flows in the positive density space. In section \ref{section4}, we demonstrate several concrete examples of proposed mean-field information dynamics. In section \ref{sec5}, we design the primal-dual hybrid-gradient methods to compute the mean-field information dynamics. {It also provides a scheme to compute the mean-field control problem implicitly in time.} Several numerical examples are presented. 

\section{Review}\label{sec2}
In this section, we review some facts on gradient systems and optimal control in a $d$-dimensional Euclidean space. {We next apply these facts into the infinite-dimensional space.}
\subsection{Gradient flows}
Consider a optimization problem in $\mathbb{R}^d$ 
\begin{equation*}
\min_{x\in\mathbb{R}^d}~f(x),
\end{equation*}
where the function $f\colon \mathbb{R}^d\rightarrow\mathbb{R}$ is a given smooth objective function. To find the minimizer of function $f$, consider an initial value dynamical system 
\begin{equation}\label{dynamics}
\frac{dx(t)}{dt}=-g(x(t))^{-1}\nabla f(x(t)),\quad x(0)=x_0,
\end{equation}
where $g\colon\mathbb{R}^d\rightarrow \mathbb{R}^{d\times d}$ is a given matrix function. 
In practice, there are several natural choices of matrix functions $g$. 
\begin{itemize} 
\item[(i)] If $g(x)=\mathbb{I}$, where $\mathbb{I}$ is an identity matrix. Dynamic \eqref{dynamics} forms the gradient flow in Euclidean space. 
\item[(ii)] If $g(x)=\nabla^2 f(x)$, where $\nabla^2$ is the Euclidean Hessian operator. Dynamic \eqref{dynamics} satisfies Newtonian flow in Euclidean space. 
\end{itemize}
Assume that matrix function $g$ is positive definite. We observe that the objective function $f$ decays along the 
dynamic \eqref{dynamics}. In other words, 
\begin{equation}\label{Ly}
\frac{d}{dt}f(x(t))=\nabla f(x(t))^{\ts} \frac{dx(t)}{dt}=-\nabla f(x(t))^{\ts}g(x(t))^{-1}\nabla f(x(t))\leq 0. 
\end{equation}
The above decaying behavior is known as a Lyapunov method, in which the objective function is a ``natural'' Lyapunov function for equation \eqref{dynamics}. Dynamic \eqref{dynamics} can be viewed as a gradient flow in the metric space $(\mathbb{R}^d, g)$. The matrix function $g$ is often named the metric tensor. It is also called the matrix operator or the preconditioner matrix. 
\subsection{Optimal control}
In this metric space $(\mathbb{R}^d, g)$, one often considers the following variational problem. Denote $k\colon \mathbb{R}^d\rightarrow\mathbb{R}$ as a given smooth potential function and formulate $L\colon\mathbb{R}^d\times \mathbb{R}^d\rightarrow\mathbb{R}$ as a Lagrangian function: 
\begin{equation*}
L(x,v)=\frac{1}{2}v^{\ts}g(x)v-k(x). 
\end{equation*}
Consider
\begin{equation}\label{Lagrangian}
\frac{1}{2}\mathrm{D}(x_0,x_1)^2:=\inf_{x\colon [0,1]\rightarrow \mathbb{R}^d}\int_0^1 L(x(t), \frac{dx(t)}{dt}) dt,
\end{equation}
where the infimum is taken among all smooth paths $x(t)\in\mathbb{R}^d$, $t\in[0,1]$ with fixed initial and terminal functions $x_0$, $x_1$. By direct calculations, the Euler-Lagrange equation of  problem \eqref{Lagrangian} is formulated below. {Denote a Hamiltonian
$H\colon\mathbb{R}^d\times\mathbb{R}^d\rightarrow\mathbb{R}$ as the convex conjugate of $L$.}
\begin{equation}\label{Hamiltonian}
\begin{split}
H(x,p)=\sup_{v\in \mathbb{R}^d}\quad p^{\ts}v-L(x,v)
=\frac{1}{2}p^{\ts}g(x)^{-1}p+k(x). 
\end{split}
\end{equation}
where $x\in\mathbb{R}^d$ represents the state variable and $p\in\mathbb{R}^d$ is the momentum variable. 
The minimizer of variational problem \eqref{Lagrangian} satisfies $x(0)=x_0$, $x(1)=x_1$, with 
\begin{equation}\label{PD}
\left\{\begin{aligned}
\frac{dx(t)}{dt}=&\nabla_pH(x(t),p(t)),\\
\frac{dp(t)}{dt}=&-\nabla_x H(x(t),p(t)).
\end{aligned}\right.
\end{equation}
If $k=0$, equation \eqref{PD} is called the geodesic equation in metric space $(\mathbb{R}^d, g)$, and $\mathrm{D}(x_0, x_1)$ is the distance function. The above flow defines the characteristics of Hamilton-Jacobi equation. Consider a value function $U\colon [0,\infty)\times \mathbb{R}^d\rightarrow\mathbb{R}$, such that 
\begin{equation*}
\partial_tU(t,x)+H(x, \nabla_xU(t,x))=0.
\end{equation*}
And
\begin{equation*}
p(t)=\nabla_xU(t,x). 
\end{equation*}

\subsection{Gradient flows and variational time discretizations}
{We remark that both gradient flow \eqref{dynamics} and Hamiltonian flow \eqref{PD} are different but connected with each other. One can design a variational implicit scheme for gradient flow \eqref{dynamics}.}

Formally speaking, the gradient flow \eqref{dynamics} can be written below:
\begin{equation*}
\left\{\begin{aligned}
\frac{dx(t)}{dt}=&\nabla_pH(x(t), p(t)),\\
p(t)=&-\nabla_xf(x(t)),
\end{aligned}\right.
\end{equation*}
where $H$ is the quadratic Hamiltonian function defined in \eqref{Hamiltonian} with $k=0$. {We observe that the equation $p(t)=-\nabla_xf(x(t))$ does not satisfy the second equation in \eqref{PD} directly. However, one can construct a time approximation variational scheme, which enforces $p(t)=-\nabla f(x(t))$ at the terminal time. } 

Denote a time stepsize  $\Delta t>0$. Construct a sequence $\{x_k\}_{k=1}^\infty$ below. Consider an iterative variational sequence 
\begin{equation}\label{min}
\inf_{x\colon [t_k,t_{k+1}]\rightarrow \mathbb{R}^d}~\Big\{\int_{t_k}^{t_{k+1}} L(x(t), \frac{dx(t)}{dt}) dt+f(x(t_{k+1}))\colon x(t_k)=x_k\Big\}.
\end{equation}
Write 
\begin{equation*}
x_{k+1}=x(t_{k+1}),
\end{equation*}
where $x(t_{k+1})$ is the minimizer of variational problem \eqref{min}. 
\begin{proposition}\label{prop}
The minimizer of variational problem \eqref{min} satisfies  
\begin{equation}\label{HFL1}
\left\{\begin{aligned}
&\frac{dx(t)}{dt}=\nabla_p H(x(t), p(t)), \quad t\in [t_{k}, t_{k+1})\\
&\frac{dp(t)}{dt}=-\nabla_xH(x(t), p(t)),\quad t\in [t_{k}, t_{k+1}),\\
&p(t_{k+1})=-\nabla_xf(x(t_{k+1})). 
\end{aligned}\right.
\end{equation}
Then $\{x_k\}_{k=1}^\infty$ is a first-order time discretization of gradient flow \eqref{dynamics}. 
\end{proposition}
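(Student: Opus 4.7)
The plan is to derive the characterization of the minimizer by the standard Euler-Lagrange/transversality calculation and then read off first-order consistency from a one-step Taylor expansion. First I would form the augmented action functional
\[
\mathcal{J}[x] = \int_{t_k}^{t_{k+1}} L(x(t),\dot{x}(t))\,dt + f(x(t_{k+1}))
\]
subject only to $x(t_k)=x_k$, and take a smooth variation $x+\varepsilon\eta$ with $\eta(t_k)=0$ but $\eta(t_{k+1})$ free. Integrating by parts in the usual way gives
\[
\delta \mathcal{J} = \int_{t_k}^{t_{k+1}} \Big( \nabla_x L - \tfrac{d}{dt} \nabla_v L \Big)^{\ts} \eta \, dt + \big( \nabla_v L(x,\dot{x}) + \nabla f(x) \big)^{\ts}\eta\,\Big|_{t=t_{k+1}}.
\]
Setting this to zero for all admissible $\eta$ yields the interior Euler-Lagrange equation and the natural (transversality) boundary condition $\nabla_v L(x(t_{k+1}),\dot{x}(t_{k+1})) = -\nabla f(x(t_{k+1}))$. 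Note that the potential $k$ inside $L$ is independent of $v$, so only the kinetic part contributes to $\nabla_v L$.

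Next I would translate to Hamiltonian variables via the Legendre transform $p = \nabla_v L(x,v) = g(x)v$. By the same calculation that led from \eqref{Lagrangian} to \eqref{PD}, the Euler-Lagrange equation is equivalent to the Hamiltonian system $\dot x = \nabla_p H$, $\dot p = -\nabla_x H$ on $[t_k,t_{k+1})$ with $H$ as in \eqref{Hamiltonian}. Under this change of variables the transversality condition becomes exactly $p(t_{k+1}) = -\nabla f(x(t_{k+1}))$, which is the third equation in \eqref{HFL1}. This justifies the three-line Hamiltonian characterization of the minimizer.

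For first-order consistency with \eqref{dynamics} I would specialize to $k=0$, so that $H(x,p)=\tfrac12 p^{\ts} g(x)^{-1} p$ and the first Hamiltonian equation reads $\dot x = g(x)^{-1} p$. Evaluating at $t=t_{k+1}$ and substituting the terminal condition,
\[
\dot{x}(t_{k+1}) = g(x(t_{k+1}))^{-1} p(t_{k+1}) = -g(x(t_{k+1}))^{-1}\nabla f(x(t_{k+1})).
\]
A short Taylor expansion on the interval of length $\Delta t$ starting from $x_k$ then gives
\[
x_{k+1}-x_k = \Delta t\,\dot{x}(t_{k+1}) + O(\Delta t^2) = -\Delta t\,g(x_k)^{-1}\nabla f(x_k) + O(\Delta t^2),
\]
where I replaced $x(t_{k+1})$ by $x_k$ at $O(\Delta t)$. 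Comparison with the explicit Euler discretization of \eqref{dynamics} identifies $\{x_k\}$ as a first-order scheme.

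The main obstacle is really just making sure the natural boundary condition is extracted cleanly; once that is in place, the translation to Hamiltonian form and the Taylor expansion are mechanical. A subordinate technical issue is the existence of a smooth minimizer on $[t_k,t_{k+1}]$, which I would dispatch by noting that the positive definiteness of $g$ makes the quadratic kinetic part coercive in $v$ and that, for $\Delta t$ small, the terminal cost $f$ perturbs this only mildly, so standard direct-method arguments apply. I would not dwell on this step but simply invoke it.
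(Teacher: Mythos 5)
Your proposal is correct and follows essentially the same route as the paper: characterize the minimizer by the Hamiltonian system together with the transversality condition $p(t_{k+1})=-\nabla f(x(t_{k+1}))$, then perform a one-step Taylor expansion of $x_{k+1}-x_k=\int_{t_k}^{t_{k+1}}\nabla_pH\,dt$; you merely spell out the natural-boundary-condition derivation that the paper asserts, and your final substitution $x(t_{k+1})\mapsto x_k$ lands on the explicit Euler form whereas the paper keeps $x_{k+1}$ and reads the update as backward Euler up to $o(h)$ — both yield the claimed first-order consistency with \eqref{dynamics}.
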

\begin{proof}[Proof of Proposition \ref{prop}]
To see this fact, one can solve the minimization problem \eqref{min} and observe that 
\begin{equation*}
\left\{\begin{aligned}
\frac{dx}{ds}=&\nabla_pH(x,p),\quad \frac{dp}{ds}=-\nabla_x H(x,p),\\
x(t_k)=&x_k,\quad p(t_{k+1})=-\nabla_xf(x(t_{k+1})).
\end{aligned}\right.
\end{equation*}
The update forms 
\begin{equation*}
\begin{split}
x_{k+1}=&x(t_{k+1})=x_k+\int_{t_k}^{t_{k+1}}\frac{dx(t)}{dt}dt\\
=&x_k+\int_{t_k}^{t_{k+1}}\nabla_pH(x(t),p(t))dt\\
=&x_k+(t_{k+1}-t_k)\nabla_pH(x(t),p(t))|_{t=h}+o(h)\\
=&x_k-hg(x_{k+1})^{-1}\nabla_x f(x_{k+1})+o(h). 
\end{split}
\end{equation*}
{Hence $\{x_k\}_{k=1}^\infty$ is a backward Euler time discretization of $x(t)$ up to a small order perturbation. We notice that the implicit scheme is a first order time discretization in term of stepsize $h$.} 
\end{proof}

\begin{remark}
We remark that equation \eqref{PD}, \eqref{min} are consequences of Pontryagin's maximum principle. A modification of this connects with gradient flows. These modifications induce implicit schemes to approximate gradient flows. 
\end{remark}

\section{Mean-field information metric spaces and their dynamics}\label{sec3}
In this section, we first develop motivations and examples, including reaction diffusion equations and Lyapunov functionals. We next review the mean-field information metric space in positive density space; see \cite{AGS,AM1}. Finally, we formulate both gradient flows and Hamiltonian flows in positive density metric space. We define a class of mean-field control problems.
\subsection{Motivation and examples}
In this subsection, we review some known facts about reaction-diffusion equations in term of optimal transport type gradient flows. See related studies of diffusion equations in \cite{otto2001}, and reaction-diffusion equations \cite{AM1,AM}. 
In a word, we can construct metrics and mean-field control problems for some nonlinear reaction-diffusion equations from Lyapunov functionals. 

Consider a scalar nonlinear reaction-diffusion equation
\begin{equation}\label{gheat}
\partial_t u(t,x)=\Delta F(u(t,x))+R(u(t,x)), 
\end{equation}
where $x\in \Omega$, $\Omega\subset\mathbb{R}^d$ is compact convex set, $u\in \mathcal{M}(\Omega)=\{u\in C^\infty(\Omega)\colon u\geq 0\}$, and $\Delta$ is the Euclidean Laplacian operator. {We assume periodic boundary conditions on the boundary of the spatial domain $\Omega$,} and $F$, $R\colon \mathbb{R}_+\rightarrow\mathbb{R}_+$ are smooth functions. 

We next construct a Lyapunov functional $\mathcal{G}\colon \mathcal{M}(\Omega)\rightarrow\mathbb{R}$ to study equation \eqref{gheat}. Consider
\begin{equation*}
\mathcal{G}(u)=\int G(u(x))dx, 
\end{equation*}
where $G\colon \mathbb{R}\rightarrow \mathbb{R}$ is a convex function with $G''(u)>0$. In this case, along the reaction-diffusion equation \eqref{gheat}, we observe that 
\begin{equation*}
\begin{split}
\frac{d}{dt}\mathcal{G}(u(t,\cdot))=&\int G'(u(t,x))\cdot \partial_tu(t,x)dx\\
=&\int G'(u(t,x)) (\Delta F(u(t,x))+R(u(t,x))) dx\\
=&-\int \Big(\nabla G'(u(t,x)), \nabla F(u(t,x))\Big) dx+\int G'(u(t,x)) R(u(t,x))dx\\
=&-\int \Big(\nabla G'(u(t,x)), \nabla u(t,x)\Big)F'(u(t,x)) dx+\int G'(u(t,x)) R(u(t,x))dx\\
=&-\int \Big(\nabla G'(u(t,x)), \nabla u(t,x)\Big)G''(u(t,x))\frac{F'(u(t,x))}{G''(u(t,x))} dx+\int G'(u(t,x))^2 \frac{R(u(t,x))}{G'(u(t,x))}dx\\
=&-\int \Big(\nabla G'(u(t,x)), \nabla G'(u(t,x))\Big)\frac{F'(u(t,x))}{G''(u(t,x))} dx+\int G'(u(t,x))^2 \frac{R(u(t,x))}{G'(u(t,x))}dx,
\end{split}
\end{equation*}
where we apply integration by parts in the third equality and $\nabla G'(u)=G''(u)\nabla u$ in the last equality. 

We assume that $R\in C^{1}(\Omega)$ is a given function with $-\frac{R}{G'}>0$, and $F'(u)>0$ for $u>0$. Under these assumptions, it is clear that 
\begin{equation*}
\frac{d}{dt}\mathcal{G}(u)\leq 0. 
\end{equation*}
This indicates that functional $\mathcal{G}(u)$ is not increasing along flow \eqref{gheat}. 

In fact, the above decay behavior indicates a gradient flow formulation for dynamics \eqref{gheat}. We introduce the following notations. Denote an {inverse of the weighted elliptic operator} 
\begin{equation*}
g(u):=\Big(-\nabla\cdot (\frac{F'(u)}{G''(u)}\nabla)-\frac{R(u)}{G'(u)}\Big)^{-1}.
\end{equation*}
We have 
\begin{equation}\label{org}
\begin{split}
\partial_tu=&-g(u)^{-1}\frac{\delta}{\delta u}\mathcal{G}(u)\\
=&-\Big(-\nabla\cdot (\frac{F'(u)}{G''(u)}\nabla)-\frac{R(u)}{G'(u)}\Big)\frac{\delta}{\delta u}\mathcal{G}(u)\\
=&\nabla\cdot(\frac{F'(u)}{G''(u)}\nabla G'(u))+\frac{R(u)}{G'(u)}G'(u)\\
=&\Delta F(u)+ R(u),
\end{split}
\end{equation}
where $\frac{\delta}{\delta u}$ represents the $L^2$ first variation w.r.t. $u\in \mathcal{M}(\Omega)$. In the above notation, the dissipation of Lyapunov functional $\mathcal{G}$ along equation \eqref{gheat} satisfies
\begin{equation*}
\frac{d}{dt}\mathcal{G}(u)=-\int \Big(\frac{\delta}{\delta u}\mathcal{G}(u), g(u)^{-1}\frac{\delta}{\delta u}\mathcal{G}(u)\Big)dx\leq 0. 
\end{equation*}
Clearly, our assumptions on $F$, $R$ are sufficient conditions to guarantee that $g(u)$ is a ``positive definite'' operator.

\subsection{Mean-field information metrics and their gradient flows}
In this subsection, we illustrate a formal definition of metric space and gradient flows. See details in \cite{AGS,MM,AM}. 

Denote a {smooth positive density space} as
\begin{equation*}
\mathcal{M}=\Big\{u\in C^{\infty}(\Omega)\colon u> 0\Big\}.
\end{equation*}
Given $F$, $G\colon \mathbb{R}\rightarrow \mathbb{R}$ satisfying $F'(u)>0$ if $u>0$, and $G''(u)>0$. Denote 
\begin{equation*}
V_1(u)=\frac{F'(u)}{G''(u)},\qquad V_2(u)=-\frac{R(u)}{G'(u)}.
\end{equation*}  
Denote the tangent space of $\mathcal{M}$ at $u\in \mathcal{M}$ as
\begin{equation*}
  T_u\mathcal{M} = \Big\{\sigma\in C^{\infty}(\Omega)\Big\}.
\end{equation*}  
We define the $F$, $G$, $R$ induced metric in the positive density space. 

\begin{definition}[Mean-field information metric]\label{metric}
  The inner product $g(u)\colon
  {T_u}\mathcal{M}\times{T_u}\mathcal{M}\rightarrow\mathbb{R}$ is given below. For any $\sigma_1$, $\sigma_2\in T_u\mathcal{M}$, define 
  \begin{equation*} g(u)(\sigma_1,
    \sigma_2)=\int_{\Omega} \sigma_1\Big(-\nabla\cdot(V_1(u)\nabla)+V_2(u)\Big)^{-1}\sigma_2 dx,
  \end{equation*}
  where 
 \begin{equation*}
  \Big(-\nabla\cdot(V_1(u)\nabla)+V_2(u)\Big)^{-1}\colon
  {T_u}\mathcal{M}\rightarrow{T_u}\mathcal{M},
 \end{equation*}  
 denotes the inverse operator of weighted elliptic operator $-\nabla\cdot(V_1(u)\nabla)+V_2(u)$. The other formulation of metric is given below. Denote $\Phi_i\in C^{\infty}(\Omega)$, such that 
\begin{equation*}
\sigma_i=-\nabla\cdot(V_1(u)\nabla\Phi_i)+V_2(u)\Phi_i,\quad i=1,2,
\end{equation*}
Hence the metric satisfies 
\begin{equation*}\begin{split}
    g(u)(\sigma_1, \sigma_2)   =&\int_\Omega (\nabla\Phi_1, \nabla\Phi_2) V_1(u)dx+\int_\Omega \Phi_1\Phi_2V_2(u)dx.
  \end{split} 
\end{equation*}
\end{definition}
\begin{remark}
We remark that the special case of the above metric is the $L^2$-Wasserstein metric, which is well-studied in optimal transport. It corresponds to $V_1=u$, $V_2=0$. 
It also contains the Fisher-Rao metric, which is important in information geometry. It corresponds to $V_1=0$, $V_2=u$. There are several interactive studies of them in unbalanced optimal transport $V_1=V_2=u$ \cite{CPSV, LMS} and unnormalized optimal transport $V_1=u$, $V_2=1$ \cite{GLOP, LLLO}. They are different choices of metric operators $g(u)$, depending on the Lyapunov functional. We call the above metrics {\em mean-field information metrics}. 
\end{remark}
We are now ready to formulate gradient flows in $(\mathcal{M},g)$.
\begin{proposition}[Mean-field information Gradient flow]
{Given an energy functional $\mathcal{E}\colon \mathcal{M}\rightarrow \mathbb{R}$, the
gradient flow of $\mathcal{E}$ in $(\mathcal{M}(\Omega), g)$ satisfies
\begin{equation}\label{MGD}
  \partial_t u(t,x)=\nabla\cdot(V_1(u)\nabla\frac{\delta}{\delta u}\mathcal{E}(u))(t,x)-V_2(u)\frac{\delta}{\delta u}\mathcal{E}(u)(t, x).
\end{equation}
If 
\begin{equation*}
\mathcal{E}(u)=\mathcal{G}(u)=\int G(u)dx,    
\end{equation*}
then equation \eqref{MGD} forms the reaction-diffusion equation \eqref{gheat}. }
\end{proposition}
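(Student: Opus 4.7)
The plan is to mirror the finite-dimensional recipe of Section~\ref{sec2} in the positive density space $(\mathcal{M},g)$. In finite dimensions the gradient flow of $f$ takes the form $\dot x = -g(x)^{-1}\nabla f(x)$; the infinite-dimensional analogue is $\partial_t u = -\mathrm{grad}_g\mathcal{E}(u)$, where the Riemannian gradient is defined by the duality
\[
d\mathcal{E}(u)[\sigma] \;=\; g(u)(\mathrm{grad}_g\mathcal{E}(u),\sigma) \qquad\text{for all } \sigma\in T_u\mathcal{M}.
\]
So the first step is simply to identify $\mathrm{grad}_g\mathcal{E}$ from the above pairing.

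I would use $d\mathcal{E}(u)[\sigma]=\int_\Omega \tfrac{\delta\mathcal{E}}{\delta u}\sigma\,dx$ on one side, and Definition~\ref{metric} on the other, to rewrite the duality as
\[
\int_\Omega \tfrac{\delta\mathcal{E}}{\delta u}\,\sigma\,dx \;=\; \int_\Omega \mathrm{grad}_g\mathcal{E}\cdot\bigl(-\nabla\cdot(V_1(u)\nabla)+V_2(u)\bigr)^{-1}\sigma\,dx
\]
for every $\sigma\in T_u\mathcal{M}$. Letting $L:=-\nabla\cdot(V_1(u)\nabla)+V_2(u)$ and substituting $\sigma=L\Phi$ (equivalently, using the $\Phi$-formulation of the metric from Definition~\ref{metric}) makes $L$ and $L^{-1}$ self-adjoint under the $L^2$ pairing with periodic boundary data, and I can read off
\[
\mathrm{grad}_g\mathcal{E}(u) \;=\; L\Bigl(\tfrac{\delta\mathcal{E}}{\delta u}\Bigr) \;=\; -\nabla\cdot\!\Bigl(V_1(u)\nabla\tfrac{\delta\mathcal{E}}{\delta u}\Bigr)+V_2(u)\tfrac{\delta\mathcal{E}}{\delta u}.
\]
Setting $\partial_t u = -\mathrm{grad}_g\mathcal{E}$ then yields \eqref{MGD} immediately.

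For the second assertion, I specialize to $\mathcal{E}=\mathcal{G}$, so $\tfrac{\delta\mathcal{G}}{\delta u}=G'(u)$. Plugging this into \eqref{MGD}, substituting the definitions $V_1=F'/G''$ and $V_2=-R/G'$, and using the chain rule $\nabla G'(u)=G''(u)\nabla u$, the expression collapses to
\[
\partial_t u \;=\; \nabla\cdot\!\Bigl(\tfrac{F'(u)}{G''(u)}\,G''(u)\nabla u\Bigr)+\tfrac{R(u)}{G'(u)}\,G'(u) \;=\; \Delta F(u)+R(u),
\]
which is exactly \eqref{gheat}. This step is essentially the reverse of the Lyapunov dissipation computation already carried out in the motivation subsection, so no new calculation is really needed beyond bookkeeping.

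The main obstacle is the functional-analytic step of inverting the weighted elliptic operator $L=-\nabla\cdot(V_1(u)\nabla)+V_2(u)$ on $T_u\mathcal{M}$ and justifying that the resulting $g(u)$ is a bona fide inner product. Under the standing assumptions $V_1(u)>0$ and $V_2(u)>0$ with periodic boundary conditions, this reduces to a standard Lax--Milgram argument; the inverse $L^{-1}$ is then automatically self-adjoint and positive definite, which is what makes the identification of $\mathrm{grad}_g\mathcal{E}$ rigorous. Once that analytic point is in place, the rest of the proposition is an algebraic verification.
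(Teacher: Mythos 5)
Your proposal is correct and follows essentially the same route as the paper: identify $\mathrm{grad}_g\mathcal{E}(u)$ from the duality pairing $g(u)(\sigma,\mathrm{grad}_g\mathcal{E}(u))=\int \frac{\delta\mathcal{E}}{\delta u}\sigma\,dx$, which gives $\mathrm{grad}_g\mathcal{E}(u)=-\nabla\cdot(V_1(u)\nabla\frac{\delta\mathcal{E}}{\delta u})+V_2(u)\frac{\delta\mathcal{E}}{\delta u}$, then specialize to $\mathcal{E}=\mathcal{G}$ and use $V_1=F'/G''$, $V_2=-R/G'$ with the chain rule to recover $\Delta F(u)+R(u)$. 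Your added remarks on self-adjointness of the weighted elliptic operator and the Lax--Milgram justification are extra rigor the paper's formal proof omits, but the argument is the same.
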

\begin{proof}
  The proof follows the definition. The gradient operator in $(\mathcal{M}(\Omega), g)$ is defined by 
  \begin{equation}\label{GD}
    g(\sigma,
    \mathrm{grad}\mathcal{E}(u))=\int \frac{\delta}{\delta u(x)}\mathcal{E}(u)\cdot\sigma(x)dx,\quad\textrm{for
      any $\sigma(x)\in T_u\mathcal{M}$}.
  \end{equation}
  In other words, 
  \begin{equation*}
     \begin{split}
      \mathrm{grad}\mathcal{E}(u)=&g(u)^{-1}\frac{\delta}{\delta u}\mathcal{F}(u)\\
      =&-\nabla\cdot(V_1(u)\nabla \frac{\delta}{\delta u}\mathcal{E}(u))+V_2(u)\frac{\delta}{\delta u}\mathcal{E}(u),
\end{split}     
  \end{equation*} 
  which finishes the proof. Thus the gradient flow in $(\mathcal{M}(\Omega), g)$ satisfies
  \begin{equation*}
    \partial_tu(t,x)=-\mathrm{grad}\mathcal{E}(u)(t,x)=\nabla\cdot(V_1(u)\nabla \frac{\delta}{\delta u}\mathcal{E}(u))-V_2(u)\frac{\delta}{\delta u}\mathcal{E}(u).
  \end{equation*}
If $\mathcal{E}(u)=\int_\Omega G(u) dx$, then
\begin{equation*}
\begin{split}
\partial_t u=&\nabla\cdot(V_1(u)\nabla \frac{\delta}{\delta u}\mathcal{E}(u))-V_2(u)\frac{\delta}{\delta u}\mathcal{E}(u)\\
=&\nabla\cdot(\frac{F'}{G''}\nabla G')+G'\cdot\frac{R}{G'}\\
=&\nabla\cdot(\frac{F'}{G''} G''\nabla u)+G'\cdot\frac{R}{G'}\\
=&\Delta F(u)+R(u).
\end{split}
\end{equation*}
\end{proof}
We next present the decay of the Lyapunov functional along gradient flow equation \eqref{gheat}.  
\begin{proposition}[Mean-field information De-Bruijn identity]
  Suppose $u(t,x)$ satisfies \eqref{gheat}, then
  \begin{equation*}
    \frac{d}{dt}\mathcal{G}(u)=-\mathcal{I}(u),
  \end{equation*}
  where $\mathcal{I}\colon \mathcal{M}(\Omega)\rightarrow\mathbb{R}$ is a functional:
  \begin{equation}\label{GF}
  \begin{split}
    \mathcal{I}(u)=&\int_\Omega \|\nabla G'(u)\|^2V_1(u)dx+\int_\Omega |G'(u)|^2V_2(u)dx.
\end{split}
\end{equation}
\end{proposition}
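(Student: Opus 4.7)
The plan is to carry out a direct calculation that essentially reproduces the dissipation computation already done in the motivation subsection, but now framed explicitly as an identity. First I would differentiate $\mathcal{G}(u)=\int G(u)\,dx$ under the integral sign, producing $\frac{d}{dt}\mathcal{G}(u)=\int G'(u)\,\partial_t u\,dx$, and then substitute the reaction–diffusion equation \eqref{gheat} to split the result into a diffusive piece $\int G'(u)\Delta F(u)\,dx$ and a reactive piece $\int G'(u)R(u)\,dx$.

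Next I would handle the two pieces separately. For the diffusive piece I would integrate by parts (using the periodic boundary conditions on $\Omega$ to discard boundary terms) to rewrite it as $-\int \nabla G'(u)\cdot\nabla F(u)\,dx$. Then I would use the chain rule identities $\nabla G'(u)=G''(u)\nabla u$ and $\nabla F(u)=F'(u)\nabla u$ to express $\nabla F(u)=\frac{F'(u)}{G''(u)}\nabla G'(u)=V_1(u)\nabla G'(u)$, so that the diffusive piece becomes $-\int |\nabla G'(u)|^2 V_1(u)\,dx$. For the reactive piece I would simply multiply and divide by $G'(u)$ to get $G'(u)R(u)=|G'(u)|^2\cdot\frac{R(u)}{G'(u)}=-|G'(u)|^2 V_2(u)$, turning the reactive piece into $-\int |G'(u)|^2 V_2(u)\,dx$.

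Summing the two pieces gives exactly $-\mathcal{I}(u)$ as defined in \eqref{GF}, which is the identity to be proven. There is no genuine obstacle here: the computation is essentially the same chain of equalities already displayed in the derivation of \eqref{org}, just rearranged to expose $V_1(u)$ and $V_2(u)$ in the final form, and the sign conventions built into the definitions $V_1=F'/G''$ and $V_2=-R/G'$ are set up precisely so that everything lines up. The only mild care needed is to ensure the integration by parts is justified, which follows from the assumed smoothness of $u$ together with the periodic boundary conditions on $\Omega$.
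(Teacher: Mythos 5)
Your proposal is correct and follows essentially the same route as the paper: the paper substitutes the gradient-flow form $\partial_t u=\nabla\cdot(V_1(u)\nabla G'(u))-V_2(u)G'(u)$ and integrates by parts, while you substitute $\Delta F(u)+R(u)$ and convert via $\nabla F(u)=V_1(u)\nabla G'(u)$ and $G'R=-|G'|^2V_2$, which is the same computation in a different order. No gaps.
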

\begin{proof}
The proof follows from the definition of gradient flow. Note that along the gradient flow \eqref{gheat}, 
\begin{equation*}
\begin{split}
\frac{d}{dt}\mathcal{G}(u)=& \int_\Omega (\frac{\delta}{\delta u}\mathcal{G}(u), \partial_t u) dx\\
=&\int_\Omega (G'(u), \nabla\cdot(V_1(u)\nabla G'(u))-V_2(u)G'(u))dx\\
=&-\int_\Omega \|\nabla G'(u)\|^2 V_1(u) dx-\int_\Omega |G'(u)|^2V_2(u) dx\\
=&-\mathcal{I}(u),
\end{split}
\end{equation*}
where the third equality holds following the integration by parts formula. 
\end{proof}
\begin{remark}
We remark that if $G=u\log u$, $V_1=u$, $V_2=0$, then the gradient flow satisfies the heat equation. And the decay of Lyapunov functional along the heat flow satisfies 
\begin{equation*}
\mathcal{I}(u)=\int_\Omega \|\nabla \log u\|^2u dx. 
\end{equation*}
In literature, the relation $\frac{d}{dt}\mathcal{G}(u)=-\mathcal{I}(u)$ is often named the De-Bruijn identity. And $\mathcal{I}(u)$ is called the Fisher information functional. Following this spirit, we name the generalized dissipation property
``mean-field information De-Bruijn identity''. And we call $\mathcal{I}$ the ``mean-field information functional''; see examples in \cite{LiY}. 
\end{remark}
\begin{remark}
The gradient flow not only works for a scalar function $u$. One can define a similar metric operator for a vector valued function $u$; see examples in \cite{AM}. It is worth mentioning that there are more general choices of $V_1$, which includes kernel functions;
see examples in \cite{C1,O1,LiHess}. 
\end{remark}
\begin{remark}
In information geometry \cite{IG} and its applications in machine learning, the Fisher-Rao gradient flow is known as the natural gradient flow. The Fisher-Rao metric refers to $V_1(u)=0$, $V_2(u)=u$. This metric and gradient flow has been widely used in machine learning. In addition, the mean-field information gradient flow is the generalization of the ``natural gradient'' flow. The terminology ``natural'' corresponds to the ``projection'' operation. In other words, one projects the infinite dimensional metric space into finite dimensional parameterized models, e.g. neural networks. In this paper, we focus on the infinite dimensional gradient flows, and design classical finite volume methods to solve the related dynamics. We postpone the related AI scientific computing methods in future work. See an initial approach in \cite{ALOG}. 
\end{remark}

\subsection{Mean-field information control problems}
In this subsection, we state the main variational problem studied in this paper, for which we will design fast numerical methods. We first study the critical point of a variational problem in positive density space. {We call the derived system {\em mean-field information dynamics}.}

\begin{definition}[Mean-field information control problems]
Denote an energy functional  $\mathcal{F}\colon \mathcal{M}(\Omega)\rightarrow\mathbb{R}$, and write 
\begin{equation*}
V_1(u)=\frac{F'(u)G'(u)}{G''(u)},\qquad V_2(u)=-\frac{R(u)}{G'(u)}.    
\end{equation*}
Consider a variational problem  
\begin{subequations}\label{osher}
\begin{equation}\label{oshera}
\begin{split}
&\inf_{v_1, v_2, u}\quad\int_0^1\Big[\int_\Omega \frac{1}{2}\|v_1(t,x)\|^2 V_1(u(t,x))+ \frac{1}{2}|v_2(t,x)|^2V_2(u(t,x))dx-\mathcal{F}(u)\Big]dt,
\end{split}
\end{equation}
where the infimum is taken among all density functions $u\colon [0,1]\times\Omega\rightarrow\mathbb{R}$, vector fields $v_1\colon [0,1]\times \Omega\rightarrow\mathbb{R}^d$, and reaction rate functions $v_2\colon [0,1]\times \Omega\rightarrow\mathbb{R}$, such that 
\begin{equation}\label{osherb}
\partial_t u(t,x) + \nabla\cdot( V_1(u(t,x)) v_1(t,x))=v_2(t,x)V_2(u(t,x)),
\end{equation}
with fixed initial and terminal density functions $u_0$, $u_1\in\mathcal{M}(\Omega)$.

\end{subequations}
\end{definition}
{ We briefly explain variational problem \eqref{osher} with a modeling perspective. It is a generalized optimal control problem in optimal transport \cite{BB,Villani2009_optimal} and mean-field control \cite{LL,MRP}. Suppose an infinite number of identical particles/agents evolve under both transportation and reaction. The transportation mobility is selected as $V_1$, and the reaction mobility is chosen as $V_2$. Suppose that the mean-field limit of these particles exits, which satisfies an unnormalized density function. And the evolution of density function satisfies equation \eqref{osherb}. 
Given two sets of densities $u_0$, $u_1$, what is the optimal way to move or control density $u_0$ to density $u_1$? The ``optimal'' is in the sense of the objective functional, which combines transportation and reaction kinetic energies with a potential energy. We notice that the consideration of general reaction mobility functions $V_2$ has not been considered in mean-field control/game communities \cite{LL,MRP}. We expect that the proposed variational problems will be useful in controlling reaction-diffusion models, which arise in biology, chemistry, and,  recently, social dynamics and pandemic evolution.}

We next obtain the critical point for the variational problem \eqref{osher}. Assume that a minimizer for variational problem \eqref{osher} exists. We formally present the derivation of the critical point.  

\begin{proposition}[Mean-field information Hamiltonian flows]\label{MFH}
Assume $u(t,x)>0$ for $t\in[0,1]$.
Then there exists a function $\Phi\colon [0,1]\times\Omega\rightarrow\mathbb{R}$, such that the critical points of variational problem \eqref{osher} satisfy
\begin{equation*}
v_1(t,x)=\nabla\Phi(t,x),\quad v_2(t,x)=\Phi(t,x), 
\end{equation*}
with 
\begin{equation}\label{mins}
\left\{\begin{aligned}
&\partial_tu(t,x) +\nabla\cdot(V_1(u(t,x))\nabla\Phi(t,x))= V_2(u(t,x))\Phi(t,x),\\
&\partial_t\Phi(t,x)+\frac{1}{2}\|\nabla\Phi(t,x)\|^2 V_1'(u(t,x))+\frac{1}{2}|\Phi(t,x)|^2V'_2(u(t,x))+\frac{\delta}{\delta u}\mathcal{F}(u)(t,x)=0,
\end{aligned}\right.
\end{equation} 
and 
\begin{equation*}
u(0,x)=u_0(x),\qquad u(1,x)=u_1(x).     
\end{equation*}
\end{proposition}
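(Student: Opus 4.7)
The plan is to treat \eqref{osher} as a constrained optimization problem and to derive the critical-point system by the Lagrange-multiplier (KKT) method, with the multiplier associated to the continuity-type constraint \eqref{osherb} playing the role of the potential $\Phi$. Concretely, I would introduce $\Phi\colon[0,1]\times\Omega\to\mathbb{R}$ and form the Lagrangian
\begin{equation*}
\mathcal{L}(u,v_1,v_2,\Phi) = \int_0^1\!\!\int_\Omega\!\Big[\tfrac{1}{2}\|v_1\|^2 V_1(u) + \tfrac{1}{2}|v_2|^2 V_2(u) + \Phi\bigl(\partial_t u + \nabla\cdot(V_1(u)v_1) - v_2 V_2(u)\bigr)\Big]dx\,dt - \int_0^1 \mathcal{F}(u)\,dt.
\end{equation*}
Variations with respect to $\Phi$ recover the constraint \eqref{osherb}. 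The pointwise optimality conditions in the control variables then come from integrating by parts in $x$ the term $\int\Phi\,\nabla\cdot(V_1(u)v_1)\,dx = -\int V_1(u)(\nabla\Phi,v_1)\,dx$, which uses the periodic boundary condition on $\partial\Omega$; here the assumption $u>0$ guarantees $V_1(u)>0$, $V_2(u)>0$, so that the resulting quadratic forms in $v_1,v_2$ are strictly convex and their unique minimizers are the stated $v_1=\nabla\Phi$, $v_2=\Phi$.

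Next I would compute the variation in $u$. Temporal integration by parts of $\int\Phi\,\partial_t u\,dx\,dt$ produces $-\partial_t\Phi$, with no boundary contributions because the endpoint conditions $u(0,\cdot)=u_0$, $u(1,\cdot)=u_1$ force the admissible perturbations of $u$ to vanish at $t=0,1$. Differentiating the remaining $u$-dependent terms and substituting the previously obtained relations $v_1=\nabla\Phi$, $v_2=\Phi$ gives
\begin{equation*}
\tfrac{1}{2}\|\nabla\Phi\|^2 V_1'(u) + \tfrac{1}{2}|\Phi|^2 V_2'(u) - \partial_t\Phi - V_1'(u)\|\nabla\Phi\|^2 - V_2'(u)|\Phi|^2 - \frac{\delta\mathcal{F}}{\delta u}(u) = 0,
\end{equation*}
and after combining the $V_1'$ and $V_2'$ contributions and multiplying by $-1$ one obtains the Hamilton–Jacobi-type equation stated in \eqref{mins}. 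Together with the constraint rewritten with $v_1=\nabla\Phi$, $v_2=\Phi$, this yields the coupled system, and the prescribed initial/terminal values on $u$ come along as the natural boundary conditions.

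The steps I anticipate as most delicate are, first, justifying the use of a smooth Lagrange multiplier $\Phi$ (existence of an interior critical point assuming $u(t,x)>0$ keeps the Hessian of the Lagrangian in $(v_1,v_2)$ uniformly positive definite and makes the KKT derivation formal but clean), and second, being careful with the boundary terms: the spatial integration by parts relies on the periodic boundary condition, while the temporal integration by parts uses that admissible variations of $u$ vanish at $t=0,1$ but admissible variations of $\Phi$ are free there, so that $\Phi$ is determined by the dynamics alone and no temporal boundary condition is imposed on it. Apart from these points, the computation is a routine first-variation calculation, and together with the recovery of $v_1,v_2$ as the Legendre duals of the momentum variables through $V_1(u)$ and $V_2(u)$, it yields exactly the system \eqref{mins} with the stated endpoint data.
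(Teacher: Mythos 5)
Your derivation is correct and reaches exactly the system \eqref{mins}: the multiplier of the constraint \eqref{osherb} becomes $\Phi$, the $v_1,v_2$ variations give $v_1=\nabla\Phi$, $v_2=\Phi$, and the $u$-variation, after substitution, collapses to the stated Hamilton--Jacobi equation. The one genuine difference from the paper is the choice of variables: the paper first performs the Benamou--Brenier change of variables $m_1=V_1(u)v_1$, $m_2=V_2(u)v_2$, so that the constraint $\partial_t u+\nabla\cdot m_1=m_2$ is \emph{linear} and the kinetic terms become the perspective-type functions $\|m_1\|^2/(2V_1(u))$ and $|m_2|^2/(2V_2(u))$; you instead take first variations directly in the velocity variables, where the constraint is bilinear in $(u,v_1)$ and $(u,v_2)$. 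Both routes give the same critical-point system (your cancellation $\tfrac12 V_i'-V_i'=-\tfrac12 V_i'$ reproduces the paper's $-\tfrac12\|m_i\|^2V_i'/V_i^2$ terms), but the paper's parametrization is what makes the objective convex when $V_1,V_2$ are concave and $\mathcal{F}$ is convex --- the property invoked in the subsequent remark to upgrade the critical point to a minimizer, and the structure on which the primal--dual algorithm of Section 5 rests. Your route is a perfectly adequate, slightly more elementary Euler--Lagrange computation for the purpose of this proposition alone. Two small points to keep in mind: the strict positivity $V_1(u),V_2(u)>0$ you invoke to divide out the mobilities is exactly where the hypothesis $u>0$ enters (in the degenerate case $V_i(u)=0$ the corresponding control is undetermined), and your remark that no temporal boundary condition is imposed on $\Phi$ is consistent with the fixed-endpoint formulation here (contrast with the iterative scheme \eqref{osher1}, where the free terminal density produces the transversality condition $\Phi(t_{k+1})=-G'(u_{k+1})$).
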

\begin{proof}
We first rewrite the variables in variational formula \eqref{osher} as
\begin{equation*}
m_1(t,x)=V_1(u)v(t,x),\quad m_2(t,x)=V_2(u)v_2(t,x), 
\end{equation*}
 Then variational problem \eqref{osher} forms 
\begin{equation}\label{variation}
\begin{split}
&\inf_{m_1, m_2, u} \Big\{\int_0^1 \int_\Omega \frac{\|m_1(t,x)\|^2}{2V_1(u(t,x))}+\frac{|m_2(t,x)|^2}{2V_2(u(t,x))}-\mathcal{F}(u)dxdt \colon \\
&\hspace{2cm} \partial_t u(t,x)+ \nabla\cdot m_1(t,x) =m_2(t,x), \quad \textrm{fixed $u_0$, $u_1$}\Big\}.
\end{split}
\end{equation}
Denote the Lagrange multiplier of problem \eqref{variation} by $\Phi$. We consider the following saddle point problem
\begin{equation*}
\begin{split}
\inf_{m_1, m_2, u}\sup_\Phi \quad \mathcal{L}(m_1, m_2, u,\Phi),
\end{split}
\end{equation*}
with 
\begin{equation*}
\begin{split}
\mathcal{L}(m_1, m_2,u,\Phi)= &\int_0^1 \int_\Omega \Big\{ \frac{\|m_1(t,x)\|^2}{2V_1(u(t,x))}+\frac{|m_2(t,x)|^2}{2V_2(u(t,x))} \\
&\hspace{1cm}+\Phi(t,x)\Big(\partial_t u(t,x)+ \nabla\cdot m_1(t,x)-m_2(t,x)\Big)\Big\} dx dt.
\end{split}
\end{equation*}
By finding the saddle point of $\mathcal{L}$,  we have
\begin{equation*}
\left\{\begin{split}
&\frac{\delta}{\delta m_1}\mathcal{L}=0,\\
& \frac{\delta}{\delta m_2}\mathcal{L}=0,\\
 &\frac{\delta}{\delta u}\mathcal{L}=0,\\
 &\frac{\delta}{\delta \Phi}\mathcal{L}=0,
\end{split}\right.\quad\Rightarrow\quad\left\{\begin{split}
&\frac{m_1}{V_1}=\nabla\Phi,\\
& \frac{m_2}{V_2}=\Phi,\\
 &-\frac{1}{2}\frac{\|m_1\|^2}{V_1^2}V_1'-\frac{1}{2}\frac{|m_2|^2}{V_2^2}V_2'-\frac{\delta}{\delta u}\mathcal{F}-\partial_t\Phi=0,\\
 &\partial_tu+\nabla\cdot m_1-m_2=0,
\end{split}\right.
\end{equation*}
where $\frac{\delta}{\delta m_1}$, $\frac{\delta}{\delta m_2}$, $\frac{\delta}{\delta u}$, $\frac{\delta}{\delta\Phi}$ are $L^2$ first variations w.r.t. functions $m_1$, $m_2$, $u$, $\Phi$, respectively. Substituting the above two row equations into the last two row equations, we derive the PDE pair \eqref{mins} in $\mathcal{M}(\Omega)$. 
\end{proof}
\begin{remark}
If $V_1=u$, $V_2=0$, the above formulation corresponds to the well-known Benamou-Brenier formula \cite{BB} in optimal transport. 
\end{remark}
\begin{remark}
If $V_1$, $V_2$ are positive functions and are convex w.r.t. $u$, and the functional $\mathcal{F}$ is convex w.r.t. $u$, then the objective functional of problem \eqref{variation} is convex. In this case, the derived flow is a minimizer of variational problem \eqref{variation}. 
\end{remark}
\begin{proposition}[Functional Hamilton-Jacobi equations in positive density space]
The Hamilton-Jacobi equation in positive density space satisfies
\begin{equation*}
\partial_t\mathcal{U}(t,u)+\frac{1}{2}\int_\Omega \|\nabla \frac{\delta}{\delta u(x)}\mathcal{U}(t,u)\|^2V_1(u)dx+\frac{1}{2}\int_\Omega  |\frac{\delta}{\delta u(x)}\mathcal{U}(t,u)|^2V_2(u)dx+\mathcal{F}(u)=0,
\end{equation*}
where $\mathcal{U}\colon [0,1]\times L^2(\Omega)\rightarrow\mathbb{R}$ is a value functional.  
\end{proposition}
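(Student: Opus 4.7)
The plan is to interpret $\mathcal{U}(t,u)$ as a cost-to-come value functional and derive the HJ equation via Bellman's dynamic programming principle, paralleling the finite-dimensional derivation of $\partial_tU+H=0$ sketched in Section~\ref{sec2}. Concretely I would set
\begin{equation*}
\mathcal{U}(t,u)=\inf\int_0^t\left[\int_\Omega \frac{1}{2}\|v_1\|^2V_1(u)+\frac{1}{2}|v_2|^2V_2(u)\,dx-\mathcal{F}(u)\right]ds,
\end{equation*}
where the infimum ranges over triples $(u,v_1,v_2)$ satisfying \eqref{osherb} on $[0,t]$ with $u(0,\cdot)=u_0$ fixed and with $u(t,\cdot)=u$ prescribed.

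The first step is to identify the density-space Hamiltonian as the Legendre dual of the Lagrangian appearing in \eqref{oshera}. Dualizing the constraint \eqref{osherb} against a multiplier $\Phi$ and taking the pointwise supremum over $v_1,v_2$ of the resulting expression yields
\begin{equation*}
\mathcal{H}(u,\Phi)=\frac{1}{2}\int_\Omega \|\nabla\Phi\|^2V_1(u)\,dx+\frac{1}{2}\int_\Omega |\Phi|^2V_2(u)\,dx+\mathcal{F}(u),
\end{equation*}
attained at $v_1=\nabla\Phi$, $v_2=\Phi$---precisely the optimal feedbacks found in the proof of Proposition~\ref{MFH}.

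The second step is Bellman's principle: along an optimal trajectory, $\mathcal{U}(t+\epsilon,u(t+\epsilon))-\mathcal{U}(t,u(t))=\int_t^{t+\epsilon}L\,ds+o(\epsilon)$. Expanding the left side as $\epsilon\,\partial_t\mathcal{U}+\epsilon\int\frac{\delta\mathcal{U}}{\delta u}\partial_tu\,dx$, reading off $\Phi=\frac{\delta\mathcal{U}}{\delta u}$ from first-order optimality, substituting $\partial_tu=-\nabla\cdot(V_1\nabla\Phi)+V_2\Phi$ from \eqref{osherb}, and integrating by parts reduces the $L^2$ pairing to $\int[\|\nabla\Phi\|^2V_1+|\Phi|^2V_2]\,dx$; combining with the quadratic terms of $L$ and the potential $-\mathcal{F}(u)$ and dividing by $\epsilon$ produces exactly $\partial_t\mathcal{U}+\mathcal{H}(u,\frac{\delta\mathcal{U}}{\delta u})=0$, which is the claimed equation. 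As a consistency check, taking $\frac{\delta}{\delta u(x)}$ of the HJ equation and absorbing the cross term involving $\frac{\delta^2\mathcal{U}}{\delta u\,\delta u}$ via the continuity line of \eqref{mins} recovers the $\Phi$-evolution in \eqref{mins}, confirming that the mean-field Hamilton system runs along the characteristics of HJ.

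The main obstacle is infinite-dimensional rigor: smoothness of $\mathcal{U}$, existence of minimizers, and validity of the envelope step for the inner supremum all demand hypotheses such as the convexity of $V_1,V_2,\mathcal{F}$ in $u$ noted in the remarks after Proposition~\ref{MFH}. Justifying the pointwise identity $\Phi=\frac{\delta\mathcal{U}}{\delta u}$ along optima---rather than merely formally---is the crux; once it is granted, the computation above gives the HJ equation directly.
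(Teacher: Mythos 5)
Your proposal is correct at the same formal level as the paper, but it reaches the equation by a different route. The paper's proof is essentially an assertion via the method of characteristics: it defines the Hamiltonian functional $\mathcal{H}(u,\Phi)=\frac{1}{2}\int_\Omega\big(\|\nabla\Phi\|^2V_1(u)+|\Phi|^2V_2(u)\big)dx+\mathcal{F}(u)$, observes that the minimizer system \eqref{mins} is exactly the Hamiltonian system $\partial_tu=\frac{\delta}{\delta\Phi}\mathcal{H}$, $\partial_t\Phi=-\frac{\delta}{\delta u}\mathcal{H}$, and then declares that this flow is the characteristic system of the stated Hamilton--Jacobi equation under the identification $\frac{\delta}{\delta u(x)}\mathcal{U}(t,u)=\Phi(t,x)$ --- in effect lifting the finite-dimensional relation reviewed in Section~\ref{sec2} to density space. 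You instead introduce the cost-to-come value functional explicitly, obtain the same $\mathcal{H}$ by Legendre duality in $(v_1,v_2)$, and run Bellman's dynamic programming principle: the cancellation $L-\int_\Omega\Phi\,\partial_tu\,dx=-\mathcal{H}(u,\Phi)$ after integrating by parts against the continuity equation is exactly right, and your consistency check that differentiating the HJ equation in $u$ recovers the $\Phi$-equation of \eqref{mins} is the converse of the paper's argument. What your approach buys is a self-contained derivation that explains \emph{why} the HJ equation holds for a specific value functional, rather than quoting the characteristics correspondence; what the paper's buys is brevity and a cleaner exhibition of the symplectic structure ($u$ as state, $\Phi$ as momentum). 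Both share the same unproven crux, which you correctly flag: the identification $\Phi=\frac{\delta}{\delta u}\mathcal{U}$ along optimal trajectories and the smoothness of $\mathcal{U}$ are taken for granted in infinite dimensions, so neither argument is rigorous, and your proposal is not weaker than the paper's on this point.
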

\begin{proof}
Denote the Hamiltonian functional $\mathcal{H}\colon L^2(\Omega)\times L^2(\Omega)\rightarrow\mathbb{R}$ as 
\begin{equation}\label{H}
\mathcal{H}(u,\Phi)=\int_\Omega \Big(\frac{1}{2}\|\nabla\Phi\|^2V_1(u)+\frac{1}{2}|\Phi|^2V_2(u)\Big)dx+\mathcal{F}(u). 
\end{equation}
Then the minimizer system \eqref{mins} satisfies 
\begin{equation*}
\partial_t u=\frac{\delta}{\delta \Phi}\mathcal{H}(u, \Phi),\quad \partial_t\Phi=-\frac{\delta}{\delta u}\mathcal{H}(u,\Phi),\quad \textrm{fixed $u_0$,~$u_1$}.
\end{equation*}
Here the density function $u$ is the state variable, while potential function $\Phi$ is the momentum variable in positive density space. The above flow forms the characteristic equation of Hamilton-Jacobi equation in positive density space. This is true by the fact that 
\begin{equation*}
    \frac{\delta}{\delta u(x)}\mathcal{U}(t,u)=\Phi(t,x).
\end{equation*}
\end{proof}

\subsection{Variational time discretization}
In this section, {we present a mean-field control problem, which 
gives a first order accuracy in small time interval limit for the reaction-diffusion equation \eqref{gheat}.}

From now on, we consider   
\begin{equation}\label{MFH}
    \mathcal{H}(u,\Phi)=\frac{1}{2}\int_\Omega\Big[ \|\nabla\Phi\|^2V_1(u)+|\Phi|^2V_2(u)\Big] dx.
\end{equation}
\begin{proposition}\label{prop8}
{The reaction-diffusion equation \eqref{gheat} can be formulated as}
\begin{equation*}
\left\{\begin{split}
\partial_t u(t,x)=&\frac{\delta}{\delta\Phi(x)}\mathcal{H}(u, \Phi)=-\nabla\cdot(V_1(u(t,x))\nabla \Phi(t,x))+V_2(u(t,x))\Phi(t,x),\\
\Phi(t,x)=&-\frac{\delta}{\delta u(x)}\mathcal{G}(u)=-G'(u(t,x)).
\end{split}\right.
\end{equation*}
\end{proposition}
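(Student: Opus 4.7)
The proof will be a direct computation that consists of two independent pieces: the variational identity for $\mathcal{H}$ and the verification that substituting $\Phi=-G'(u)$ recovers the reaction-diffusion equation \eqref{gheat}.

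First, I would compute the first variation $\frac{\delta}{\delta\Phi(x)}\mathcal{H}(u,\Phi)$ from the explicit formula \eqref{MFH}. Taking a test perturbation $\phi\in C^\infty(\Omega)$, I get
\begin{equation*}
\left.\frac{d}{d\epsilon}\right|_{\epsilon=0}\mathcal{H}(u,\Phi+\epsilon\phi)=\int_\Omega V_1(u)\nabla\Phi\cdot\nabla\phi\,dx+\int_\Omega V_2(u)\Phi\,\phi\,dx.
\end{equation*}
Using integration by parts together with the periodic boundary conditions (so the boundary term vanishes), this equals $\int_\Omega\phi\bigl(-\nabla\cdot(V_1(u)\nabla\Phi)+V_2(u)\Phi\bigr)dx$, giving the claimed formula for $\partial_t u$.

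Second, I would substitute $\Phi=-G'(u)$ into the right-hand side $-\nabla\cdot(V_1(u)\nabla\Phi)+V_2(u)\Phi$. Using the chain rule $\nabla G'(u)=G''(u)\nabla u$ and the definitions $V_1(u)=F'(u)/G''(u)$ and $V_2(u)=-R(u)/G'(u)$ from Section 3.2, the right-hand side simplifies to
\begin{equation*}
\nabla\cdot\!\left(\frac{F'(u)}{G''(u)}G''(u)\nabla u\right)-\frac{R(u)}{G'(u)}G'(u)=\nabla\cdot(F'(u)\nabla u)+R(u)=\Delta F(u)+R(u),
\end{equation*}
which is exactly \eqref{gheat}. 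This is essentially the same chain of equalities already performed in \eqref{org}; I would simply point back to that derivation rather than rewrite it.

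There is no real obstacle here, but one clarifying remark is worth making: the Hamiltonian \eqref{MFH} is a special case of \eqref{H} with $\mathcal{F}\equiv 0$, and the $V_1,V_2$ used here are the gradient-flow tensors $V_1=F'/G''$, $V_2=-R/G'$ from Definition~\ref{metric}, not the rescaled quantities $V_1=F'G'/G''$ appearing in the control formulation \eqref{osher}. Under this identification the pair $(u,\Phi)$ with $\Phi=-G'(u)$ is precisely the ``gradient-flow in Hamiltonian form'' discussed in Section 2.3: the first line is a Hamiltonian equation driven by $\mathcal{H}$, while the second line enforces the gradient-flow closure $\Phi=-\delta\mathcal{G}/\delta u$ in place of the momentum equation $\partial_t\Phi=-\delta\mathcal{H}/\delta u$. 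Once these identifications are made, the proof reduces to the two short computations above.
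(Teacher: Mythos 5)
Your proposal is correct and follows essentially the same route as the paper's proof: compute $\frac{\delta}{\delta\Phi}\mathcal{H}=-\nabla\cdot(V_1\nabla\Phi)+V_2\Phi$, substitute $\Phi=-G'(u)$, and use $\nabla G'(u)=G''(u)\nabla u$ together with $V_1=F'/G''$, $V_2=-R/G'$ to recover $\Delta F(u)+R(u)$. You merely make explicit two things the paper leaves implicit — the integration by parts under periodic boundary conditions and the fact that the relevant $V_1$ is the gradient-flow tensor $F'/G''$ rather than the rescaled one in \eqref{osher} — which is a harmless and indeed helpful clarification.
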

\begin{proof}
{The proof is based on a direct calculation. Notice 
\begin{equation*}
\frac{\delta}{\delta\Phi}\mathcal{H}(u,\Phi)=-\nabla\cdot(V_1\nabla\Phi)+V_2\Phi.    
\end{equation*}
Hence 
\begin{equation*}
\begin{split}
\frac{\delta}{\delta\Phi}\mathcal{H}(u,\Phi)|_{\Phi=-G'}
=&\nabla\cdot(V_1\nabla G')-V_2G'\\
=&\nabla\cdot(V_1\frac{F'G''}{V_1}\nabla u)+\frac{R}{G'}G'\\
=&\Delta F+R. 
\end{split}
\end{equation*}
The second equality follows from the definition of $V_1$, $V_2$. 
}
\end{proof}

We notice that proposition \ref{prop8} is useful in designing a variational implicit time discretization. It is the mean-field control generalization of Jordan-Kinderlehrer-Otto (JKO) scheme \cite{CCWW, JKO}, where they select  $V_1(u)=u$, $V_2(u)=0$, and $\mathcal{F}(u)=0$. Similarly, we consider an iterative sequence of variational problems, which approximates equation \eqref{gheat} sequentially, in each time interval $[t_k, t_{k+1}]$. 
\begin{definition}[Iterative variational formulations for reaction-diffusion equations]
Denote a time stepsize as $\Delta t>0$, and $t_k=k\Delta t$, $k=0,1,2,\cdots$, and $u_0(x)=u(0,x)$. Consider the following iterative variational problem
\begin{equation}\label{osher1}
\begin{split}
&\inf_{v_1, v_2, u(\cdot,\cdot),u_{k+1}}\int_{t_k}^{t_{k+1}}\Big\{\int_\Omega \frac{1}{2}\|v_1(t,x)\|^2 V_1(u(t,x))+ \frac{1}{2}|v_2(t,x)|^2V_2(u(t,x))dx\Big\}dt\\
&\hspace{3cm}+{\mathcal{G}(u(t_{k+1},\cdot))},
\end{split}
\end{equation}
where the infimum is taken among all density functions $u\colon [t_k, t_{k+1}]\times\Omega\rightarrow\mathbb{R}$, vector fields $v_1\colon [t_k,t_{k+1}]\times \Omega\rightarrow\mathbb{R}^d$, and reaction functions $v_2\colon [t_k,t_{k+1}]\times \Omega\rightarrow\mathbb{R}$, such that 
\begin{equation*}
\partial_t u(t,x) + \nabla\cdot( V_1(u(t,x)) v_1(t,x))=v_2(t,x)V_2(u(t,x)), 
\end{equation*}
with a fixed initial value function $u(t_k,x)=u_k(x)$ and a terminal energy functional $\mathcal{G}(u_{k+1})$. Denote the update as  
\begin{equation*}
u_{k+1}(x)=u(t_{k+1},x),\quad k=1,2,3,\cdots  
\end{equation*}
where $u(t_{k+1}, x)$ is the minimizer for variational problem \eqref{osher1}. 
\end{definition}
\begin{proposition}
{Consider the minimizer of variational problem \eqref{osher1}:
\begin{equation}\label{HFL}
\left\{\begin{aligned}
&\partial_t u+\nabla\cdot(V_1\nabla\Phi)=V_2\Phi,\hspace{1.8cm} t\in[t_k, t_{k+1}),\\
&\partial_t \Phi+\frac{1}{2}\|\nabla\Phi\|^2V_1'+|\Phi|^2V_2'=0,\qquad t\in [t_k, t_{k+1}),\\
&u(t_k,x)=u_k(x),\quad \Phi(t_{k+1},x)=-\frac{\delta}{\delta u(x)}\mathcal{G}(u)|_{t=t_{k+1}}=-G'(u_{k+1}). 
\end{aligned}\right.
\end{equation}}
Then $\{u_k\}_{k=1}^{\infty}$ in \eqref{HFL} approximates the reaction-diffusion equation \eqref{gheat} with the first order accuracy in time.
\end{proposition}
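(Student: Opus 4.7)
The plan is to proceed in two stages: first derive the Hamiltonian system \eqref{HFL} as the first-order optimality system of the iterative variational problem \eqref{osher1}, and then, using this system, verify that the scheme reduces to a backward-Euler-type discretization of \eqref{gheat} with $O(\Delta t)$ accuracy.

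\textbf{Step 1: Derivation of the optimality system.} Following the same substitution as in the proof of Proposition \ref{MFH}, I would set $m_1 = V_1(u)v_1$ and $m_2 = V_2(u)v_2$ to recast \eqref{osher1} in convex form, then introduce a Lagrange multiplier $\Phi$ for the continuity constraint. Since the initial density $u(t_k,\cdot) = u_k$ is fixed but $u(t_{k+1},\cdot)$ is free with a terminal cost $\mathcal{G}(u_{k+1})$, integration by parts in $t$ yields the usual interior equations plus a boundary term at $t_{k+1}$, whose variation with respect to $u(t_{k+1},\cdot)$ forces the transversality condition
\begin{equation*}
\Phi(t_{k+1},x) \;=\; -\frac{\delta}{\delta u(x)}\mathcal{G}(u)\Big|_{t=t_{k+1}} \;=\; -G'(u_{k+1}(x)).
\end{equation*}
Setting the remaining variations to zero gives the continuity equation for $u$ and the Hamilton–Jacobi equation for $\Phi$ appearing in \eqref{HFL}. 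The initial trajectory condition $u(t_k,x)=u_k(x)$ is enforced directly.

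\textbf{Step 2: First-order consistency.} Given \eqref{HFL}, integrate the continuity equation over $[t_k,t_{k+1}]$:
\begin{equation*}
u_{k+1}(x) - u_k(x) \;=\; \int_{t_k}^{t_{k+1}} \Big[-\nabla\cdot\bigl(V_1(u)\nabla\Phi\bigr) + V_2(u)\Phi\Big]\, dt.
\end{equation*}
Assuming sufficient regularity of $(u,\Phi)$ in $t$, Taylor-expand the integrand about $t_{k+1}$ and apply the terminal condition $\Phi(t_{k+1},x) = -G'(u_{k+1}(x))$:
\begin{equation*}
u_{k+1}(x) - u_k(x) \;=\; \Delta t\,\Big[\nabla\cdot\bigl(V_1(u_{k+1})\nabla G'(u_{k+1})\bigr) - V_2(u_{k+1})\,G'(u_{k+1})\Big] + O(\Delta t^{2}).
\end{equation*}
Using $V_1(u)\,G''(u) = F'(u)$ and $V_2(u)\,G'(u) = -R(u)$ (as in Proposition \ref{prop8}), the right-hand side collapses to $\Delta t\bigl[\Delta F(u_{k+1}) + R(u_{k+1})\bigr] + O(\Delta t^2)$. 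Thus $\{u_k\}$ is precisely a backward-Euler time discretization of \eqref{gheat}, which is first-order accurate in $\Delta t$. This is the density-space analogue of Proposition \ref{prop}, with the transversality condition playing the role of the terminal costate condition $p(t_{k+1})=-\nabla f(x(t_{k+1}))$.

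\textbf{Main obstacle.} The derivation itself is formal; the delicate part is the transversality step in Stage~1, which requires justifying the boundary integration by parts in $t$ and the admissibility of variations at the free endpoint $u(t_{k+1},\cdot)$. A secondary technical point in Stage~2 is the $O(\Delta t^2)$ bound, which implicitly assumes enough regularity of the optimal pair $(u,\Phi)$ on $[t_k,t_{k+1}]$ to legitimately evaluate the integrand at $t_{k+1}$ and control the remainder; the paper appears to treat this at the formal level, and I would proceed likewise unless stronger hypotheses are intended.
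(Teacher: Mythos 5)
Your proposal is correct and follows essentially the same route as the paper: derive the optimality system of \eqref{osher1} via a Lagrange multiplier $\Phi$ (with the transversality condition $\Phi(t_{k+1},\cdot)=-G'(u_{k+1})$ arising from the free terminal density), then integrate the continuity equation over $[t_k,t_{k+1}]$, evaluate the integrand at the terminal time using that condition, and invoke the identities $V_1G''=F'$, $V_2G'=-R$ to recover a backward-Euler step for \eqref{gheat}. Your Step 2 is in fact slightly more explicit than the paper's, which leaves the reduction to $\Delta F(u_{k+1})+R(u_{k+1})$ implicit via Proposition \ref{prop8}; both arguments are formal in the same places you identify.
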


\begin{proof}
 As the proof in Proposition \ref{MFH}, we derive the minimizer system for variational problem \eqref{osher1}. { We note that the minimizer system follows the Pontryagin maximum principle.}
Again, denote the Lagrangian multiplier of problem \eqref{osher1} as $\Phi$. We consider the following saddle point problem
\begin{equation*}
\begin{split}
\inf_{m_1, m_2, u(t,\cdot), u_{k+1}}\sup_\Phi \quad \mathcal{L}_1(m_1, m_2, u,\Phi),
\end{split}
\end{equation*}
where
\begin{equation*}
\begin{split}
\mathcal{L}_1(m_1, m_2,u,\Phi, u_{k+1})= &\int_{t_k}^{t_{k+1}} \int_\Omega \Big\{ \frac{\|m_1(t,x)\|^2}{2V_1(u(t,x))}+\frac{|m_2(t,x)|^2}{2V_2(u(t,x))} \\
&\hspace{1.5cm}+\Phi(t,x)\Big(\partial_t u(t,x)+ \nabla\cdot m_1(t,x)-m_2(t,x)\Big)\Big\} dxdt+\mathcal{G}(u_{k+1}).
\end{split}
\end{equation*}
Similarly, by finding the saddle point of $\mathcal{L}_1$, we have
\begin{equation*}
\left\{\begin{split}
&\frac{\delta}{\delta m_1}\mathcal{L}_1=0,\quad \frac{\delta}{\delta m_2}\mathcal{L}_1=0,\\
 &\frac{\delta}{\delta u}\mathcal{L}_1=0,\\
 &\frac{\delta}{\delta \Phi}\mathcal{L}_1=0,\\
 &\frac{\delta}{\delta u_{k+1}}\mathcal{G}(u_{k+1})=0,
\end{split}\right.\quad\Rightarrow\quad\left\{\begin{split}
&\frac{m_1}{V_1}=\nabla\Phi,\quad\frac{m_2}{V_2}=\Phi,\\
 &-\frac{1}{2}\frac{\|m_1\|^2}{V_1^2}V_1'-\frac{1}{2}\frac{|m_2|^2}{V_2^2}V_2'-\partial_t\Phi=0,\\
 &\partial_tu+\nabla\cdot m_1-m_2=0,\\
 &{\Phi(t_{k+1}, x)+\frac{\delta}{\delta u_{k+1}(x)}\mathcal{G}(u_{k+1})=0}.
\end{split}\right.
\end{equation*}
In other words, we have
\begin{equation*}
\left\{\begin{aligned}
\partial_t u=&\frac{\delta}{\delta \Phi}\mathcal{H}(u, \Phi),\quad \partial_t\Phi=-\frac{\delta}{\delta u}\mathcal{H}(u,\Phi),\\
 u_0(x)=&u_k(x),\qquad\Phi(t_{k+1},x)=-\frac{\delta}{\delta u_{k+1}(x)}\mathcal{G}(u_{k+1}),
\end{aligned}\right.
\end{equation*}
where 
\begin{equation*}
\mathcal{H}(u,\Phi)=\frac{1}{2}\int_\Omega \Big[\|\nabla\Phi\|^2V_1(u)+|\Phi|^2V_2(u) \Big]dx.     
\end{equation*}
We notice that the sequence $\{u_{k}\}_{k=1}^\infty$ forms an approximation for reaction-diffusion \eqref{gheat}:
\begin{equation*}
\begin{split}
u_{k+1}(x)=&u_0(x)+\int_{t_{k}}^{t_{k+1}} \partial_t u(t,x )dt\\
=&u_k(x)+\int_{t_k}^{t_{k+1}} \frac{\delta}{\delta \Phi(x)}\mathcal{H}(u(t,x), \Phi(t,x))dt\\
=&u_k(x)+ (t_{k+1}-t_k)\cdot \frac{\delta}{\delta \Phi(x)}\mathcal{H}(u, \Phi)|_{\Phi=\Phi(t_{k+1},x)}+o(\Delta t)\\
=&u_k(x)+ \Delta t\cdot\frac{\delta}{\delta \Phi(x)}\mathcal{H}(u, \Phi)|_{\Phi=-\frac{\delta}{\delta u_{k+1}}\mathcal{G}(u_{k+1})}+o(\Delta t).
\end{split}
\end{equation*}
The above update is a time discretization for equation \eqref{gheat}, which is true for a small order time increment $\Delta t$. 
We finish the derivation. 
\end{proof}
{
\begin{remark}
We remark that equation \eqref{mins}, \eqref{HFL} are again consequences of Pontryagin maximum principles. They are generalizations of equations in optimal transport and mean field control/game problems.
\end{remark}
\begin{remark}
We note that equation \eqref{HFL} is different from equation \eqref{gheat}. 
In gradient flow \eqref{gheat}, $\Phi$ is chosen as the $L^2$ gradient of Lyapunov functional $\mathcal{G}$, while in \eqref{HFL}, $\Phi$ satisfies a dual equation. However, if we intentionally ``ingore'' the equation \eqref{HFL} of $\Phi$ and keep solving the equation \eqref{HFL} of $u$, we obtain a time approximation scheme for reaction-diffusion equations. In this way, we let the terminal condition $\Phi(t_{k+1})=-G'(u(t_{k+1}))$ enter the system. And the first equation of system \eqref{HFL} does approximate the original reaction-diffusion equation. 
\end{remark}

\begin{remark}
If there is a Lyapunov functional $\mathcal{G}$ and functional $\mathcal{H}$, such that variational problem \eqref{osher1} becomes a convex optimization. We can develop a convex optimization method to approximate reaction-diffusion equation implicitly in time.  We leave these careful studies and computations for future work. In the numerical section of this paper, we develop a new and efficient algorithm for solving problem \eqref{osher}.   
\end{remark}
\begin{remark}
Variational problem \eqref{osher1} can be viewed as a generalized Moreau envelope problem in $(\mathcal{M}, g)$. Consider
\begin{equation*}
\begin{split}
u_{k+1}=&\arg\inf_{u\in\mathcal{M}}\quad\frac{1}{2h}\mathrm{Dist}(u_k, u)^2+ \mathcal{G}(u). 
\end{split}
\end{equation*}
Here the distance functional $\mathrm{Dist}(u_k, u)^2$ is the value function in variational problem \eqref{osher}, where we select $\mathcal{F}=0$. In detail, 
\begin{equation*}
    \mathrm{Dist}(u_k, u)^2:=\inf_{v_1, v_2, u}\quad\int_0^1\Big[\int_\Omega \|v_1(t,x)\|^2 V_1(u(t,x))+ |v_2(t,x)|^2V_2(u(t,x))\Big]dxdt, 
\end{equation*}
where the infimum is taken among $u$, $v_1$, $v_2$, such that
\begin{equation*}
\partial_t u(t,x) + \nabla\cdot( V_1(u(t,x)) v_1(t,x))=v_2(t,x)V_2(u(t,x)),
\end{equation*}
with fixed initial and terminal density functions $u_0$, $u_1$. 
\end{remark}}
\begin{remark}
We remark that the implicit time discretizations of gradient flows are not unique. There are many other semi-implicit variational time discretization methods, e.g. Crank–Nicolson algorithm  \cite{CCWW,GZW}. 
\end{remark}
\begin{remark}It is worth mentioning that the variational scheme could be useful in computing ``nonlinear'' reaction-diffusion equations, when there exists a non-quadratic functional $\mathcal{H}$. See examples in \cite{AGS,MRP}.
\end{remark}

\section{Examples}\label{section4}
In this section, we list several examples. They are designed by using both Lyapunov functionals and reaction-diffusion equations. From now on, we also study an additional energy functional $\mathcal{F}$ for the mean-field control problem \eqref{variation}. We shall design numerical schemes for them using primal-dual hybrid gradient methods. 
\begin{example}[Wasserstein metric and heat flow]
  Let 
 \begin{equation*}
  G(u)=u\log u-1, \quad F(u)=u,\quad R(u)=0,
 \end{equation*}
 thus 
  \begin{equation*}
    V_1(u)=\frac{F'(u)}{G''(u)}=u, \quad V_2(u)=-\frac{R(u)}{G'(u)}=0. 
  \end{equation*}
 The metric forms 
  \begin{equation*}
g(u)(\sigma_1,\sigma_2)=\int_\Omega (\nabla \Phi_1(x),\nabla \Phi_2(x))u(x) dx,
  \end{equation*}
  with $\sigma_i=-\nabla\cdot(u\nabla\Phi_i)$, $i=1,2$. In this case, the mean-field information metric coincides with the Wasserstein-2 metric \cite{AGS,GNT,otto2001, Villani2009_optimal}. The gradient flow of $\mathcal{G}(u)$, named negative Boltzmann-Shannon entropy, in $(\mathcal{M}(\Omega), g)$ forms the heat equation, i.e. 
  \begin{equation*}
  \partial_tu=\nabla\cdot(u\nabla G'(u))=\nabla\cdot(u G''(u)\nabla u)=\Delta u. 
  \end{equation*} 
The dissipation of $\mathcal{G}(u)$ forms 
  \begin{equation*}
  \mathcal{I}(u)=\int_\Omega \|\nabla \log u(x)\|^2 u(x) dx.
  \end{equation*}
And the Hamilton-Jacobi equation in $(\mathcal{M}(\Omega), g)$ follows  
\begin{equation*}
\partial_t\mathcal{U}(t,u)+\frac{1}{2}\int_\Omega \|\nabla \frac{\delta}{\delta u(x)}\mathcal{U}(t,u)\|^2u(x)dx+\mathcal{F}(u)=0.
\end{equation*}
Its ``characteristics'' in $(\mathcal{M}(\Omega), g)$ satisfy 
\begin{equation*}
\left\{\begin{aligned}
&\partial_tu+\nabla\cdot(u\nabla\Phi)=0,\\
&\partial_t\Phi+\frac{1}{2}\|\nabla\Phi\|^2+\frac{\delta}{\delta u}\mathcal{F}(u)=0.
\end{aligned}\right.
\end{equation*}
\end{example}

\begin{example}[Generalized Wasserstein metric and nonlinear heat flow]
Choose functions $F$, $G$, $R$, such that 
\begin{equation*}
\frac{F'(u)}{G''(u)}=u^\alpha,\quad R(u)=0, 
\end{equation*}
where $\alpha\in\mathbb{R}$. 
The metric forms 
  \begin{equation*}
g(u)(\sigma_1,\sigma_2)=\int_\Omega (\nabla \Phi_1(x),\nabla \Phi_2(x))u^\alpha(x) dx,
  \end{equation*}
  with $\sigma_i=-\nabla\cdot(u^\alpha\nabla\Phi_i)$, $i=1,2$. The gradient flow of $\mathcal{G}$ in $(\mathcal{M}(\Omega), g)$ forms 
  \begin{equation*}
  \partial_t u=\nabla\cdot(V_1(u)\nabla G'(u))=\nabla\cdot(\frac{F'(u)}{G''(u)}G''(u)\nabla u)=\nabla\cdot(F'(u)\nabla u)=\Delta F(u), 
  \end{equation*}
  and the dissipation of $\mathcal{G}(u)$ satisfies  
  \begin{equation*}
  \mathcal{I}(u)=\int_\Omega \|\nabla G'(u)\|^2u^\alpha dx. 
  \end{equation*}
And the Hamilton-Jacobi equation in $(\mathcal{M}(\Omega), g)$ follows 
\begin{equation*}
\partial_t\mathcal{U}(t,u)+\frac{1}{2}\int_\Omega \|\nabla \frac{\delta}{\delta u(x)}\mathcal{U}(t,u)\|^2u^\alpha(x)dx+\mathcal{F}(u)=0.
\end{equation*}
Its ``characteristics'' in $(\mathcal{M}(\Omega), g)$ satisfy 
\begin{equation*}
\left\{\begin{aligned}
&\partial_tu+\nabla\cdot(u^\alpha\nabla\Phi)=0,\\
&\partial_t\Phi+\frac{\alpha}{2}\|\nabla\Phi\|^2u^{\alpha-1}+\frac{\delta}{\delta u}\mathcal{F}(u)=0.
\end{aligned}\right.
\end{equation*}

\end{example}
\begin{example}[$H^{-1}$ metric and nonlinear heat flow]
Consider {$\alpha=0$} in the above example. We choose functions $F$, $G$, $R$, such that 
\begin{equation*}
\frac{F'(u)}{G''(u)}=1,\quad R(u)=0. 
\end{equation*}
In this case, 
\begin{equation*}
V_1(u)=1,\quad V_2(u)=0. 
\end{equation*}
The metric forms 
  \begin{equation*}
g(u)(\sigma_1,\sigma_2)=\int_\Omega (\nabla \Phi_1(x),\nabla \Phi_2(x))dx,
  \end{equation*}
  with $\sigma_i=-\nabla\cdot(\nabla\Phi_i)$, $i=1,2$. The gradient flow of $\mathcal{G}$ in $(\mathcal{M}(\Omega), g)$ satisfies 
  \begin{equation*}
  \partial_t u=\Delta F(u), 
  \end{equation*}
  and the dissipation of $\mathcal{G}(u)$ satisfies  
  \begin{equation*}
  \mathcal{I}(u)=\int_\Omega \|\nabla G'(u)\|^2 dx. 
  \end{equation*}
And the Hamilton-Jacobi equation in $(\mathcal{M}(\Omega), g)$ follows 
\begin{equation*}
\partial_t\mathcal{U}(t,u)+\frac{1}{2}\int_\Omega \|\nabla \frac{\delta}{\delta u(x)}\mathcal{U}(t,u)\|^2dx+\mathcal{F}(u)=0.
\end{equation*}
Its ``characteristics'' in $(\mathcal{M}(\Omega), g)$ satisfy 
\begin{equation*}
\left\{\begin{aligned}
&\partial_tu+\nabla\cdot(\nabla\Phi)=0,\\
&\partial_t\Phi+\frac{\delta}{\delta u}\mathcal{F}(u)=0.
\end{aligned}\right.
\end{equation*}
\end{example}

 \begin{example}[Fisher-Rao metric and birth-death equation]
 Consider 
 \begin{equation*}
 F(u)=0,\quad G(u)=u\log u-u,\quad R(u)=-u\log u, 
 \end{equation*} 
then 
\begin{equation*}
V_1(u)=\frac{F'(u)}{G''(u)}=0, \quad V_2(u)=-\frac{R(u)}{G'(u)}=u. 
\end{equation*}
The metric satisfies
 \begin{equation*}
 g(u)(\sigma_1,\sigma_2)=\int_\Omega \Phi_1(x)\Phi_2(x)u(x)dx, 
 \end{equation*}
with $\sigma_i(x)=\Phi_i u$, $i=1,2$. 
In this case, the mean-field information metric forms the Fisher-Rao metric in positive density space; see information geometry \cite{IG}. The gradient flow of $\mathcal{G}$ in $(\mathcal{M}(\Omega), g)$ satisfies the birth-death dynamics 
 \begin{equation*}
 \partial_t u=-V_2(u)G'(u)=-V_2(u)\log u=-u\log u. 
 \end{equation*} 
 And the dissipation of $\mathcal{G}(u)$ forms 
\begin{equation*}
\mathcal{I}(u)=\int_\Omega |\log u(x)|^2u(x)dx. 
\end{equation*}
And the Hamilton-Jacobi equation in $(\mathcal{M}(\Omega), g)$ follows 
\begin{equation*}
\partial_t\mathcal{U}(t,u)+\frac{1}{2}\int_\Omega |\frac{\delta}{\delta u(x)}\mathcal{U}(t,u)|^2u(x)dx+\mathcal{F}(u)=0.
\end{equation*}
Its ``characteristics'' in $(\mathcal{M}(\Omega), g)$ satisfy 
\begin{equation*}
\left\{\begin{aligned}
&\partial_tu-u\Phi =0,\\
&\partial_t\Phi+\frac{1}{2}|\Phi|^2+\frac{\delta}{\delta u}\mathcal{F}(u)=0.
\end{aligned}\right.
\end{equation*}
  \end{example}

\begin{example}
Consider 
\begin{equation*}
F(u)=u,\quad G(u)=u\log u-u, \quad R(u)=-u^\alpha\log u,
\end{equation*}
where $\alpha\in\mathbb{R}$ is a given value. In this case, 
  \begin{equation*}
    V_1(u)=\frac{F'(u)}{G''(u)}=u, \quad V_2(u)=-\frac{R(u)}{G'(u)}=u^\alpha. 
  \end{equation*}
The metric forms 
  \begin{equation*}
g(u)(\sigma_1,\sigma_2)=\int_\Omega (\nabla \Phi_1(x),\nabla \Phi_2(x))u(x)dx+\int_\Omega \Phi_1(x)\Phi_2(x)u(x)^\alpha dx,
  \end{equation*}
  with $\sigma_i=-\nabla\cdot(u\nabla\Phi_i)+\Phi_i u^\alpha$, $i=1,2$. The gradient flow of $\mathcal{G}$ in $(\mathcal{M}(\Omega), g)$ forms 
  \begin{equation*}
  \partial_t u=\Delta u+u^\alpha\log u, 
  \end{equation*}
  and the dissipation of $\mathcal{G}(u)$ satisfies  
  \begin{equation*}
  \mathcal{I}(u)=\int_\Omega \|\nabla \log u\|^2 udx+\int_\Omega |\log u|^2u^\alpha dx. 
  \end{equation*}
  And the Hamilton-Jacobi equation in $(\mathcal{M}(\Omega), g)$ follows 
\begin{equation*}
\partial_t\mathcal{U}(t,u)+\frac{1}{2}\int_\Omega \|\nabla \frac{\delta}{\delta u(x)}\mathcal{U}(t,u)\|^2u(x)dx+\frac{1}{2}\int_\Omega |\frac{\delta}{\delta u(x)}\mathcal{U}(t, u)|^2u(x)^\alpha dx+\mathcal{F}(u)=0.
\end{equation*}
Its ``characteristics'' in $(\mathcal{M}(\Omega), g)$ satisfy 
\begin{equation*}
\left\{\begin{aligned}
&\partial_tu+\nabla\cdot(u\nabla\Phi)=\Phi u^\alpha,\\
&\partial_t\Phi+\frac{1}{2}\|\nabla\Phi\|^2+\alpha \Phi u^{\alpha-1}+\frac{\delta}{\delta u}\mathcal{F}(u)=0.
\end{aligned}\right.
\end{equation*}
\end{example}
\begin{example}[Constant regularized optimal transport metric]
Consider 
\begin{equation*}
F(u)=u,\quad G(u)=(u+1)\log (u+1),\quad R(u)=0.
\end{equation*}
Thus 
\begin{equation*}
V_1(u)=\frac{F'(u)}{G''(u)}=u+1,\quad V_2(u)=0. 
\end{equation*}
The metric forms 
\begin{equation*}
g(u)(\sigma_1, \sigma_2)=\int_\Omega (\nabla\Phi_1, \nabla\Phi_2)(u+1)dx,
\end{equation*}
with $\sigma_i=-\nabla\cdot((u+1)\nabla\Phi_i)$, $i=1,2$. The gradient flow of $\mathcal{G}$ in $(\mathcal{M}(\Omega), g)$ satisfies 
\begin{equation*}
\partial_tu(t,x)=\Delta u(t,x). 
\end{equation*}
And the dissipation of $\mathcal{G}(u)$ satisfies 
\begin{equation*}
\mathcal{I}(u)=\int_\Omega\|\nabla \log(u+1)\|^2 (u+1)dx.  
\end{equation*}
And the Hamilton-Jacobi equation in $(\mathcal{M}(\Omega), g)$ follows 
\begin{equation*}
\partial_t\mathcal{U}(t,u)+\frac{1}{2}\int_\Omega \|\nabla\frac{\delta}{\delta u(x)}\mathcal{U}(t,u)\|^2(u+1)dx+\mathcal{F}(u)=0.
\end{equation*}
Its ``characteristics'' in $(\mathcal{M}(\Omega), g)$ satisfy
\begin{equation*}
\left\{\begin{aligned}
&\partial_tu+\nabla\cdot((u+1)\nabla\Phi)=0,\\
&\partial_t\Phi+\frac{1}{2}\|\nabla\Phi\|^2+\frac{\delta}{\delta u}\mathcal{F}(u)=0.
\end{aligned}\right.
\end{equation*}
\end{example}
\begin{example}[Fisher-KPP metric and Fisher-KPP equation]\label{FKPP}
Consider the Fisher-KPP equation
\begin{equation*}
\partial_tu=u(1-u)+\Delta u. 
\end{equation*}
Consider 
\begin{equation*}
F(u)=u,\quad G(u)=u\log u-u,\quad R(u)=u(1-u). 
\end{equation*}
Thus  
\begin{equation*}
V_1(u)=\frac{F'(u)}{G''(u)}=u,\quad V_2(u)=-\frac{R(u)}{G'(u)}=\frac{u(u-1)}{\log u}.  
\end{equation*}
The metric forms 
\begin{equation*}
g(u)(\sigma_1, \sigma_2)=\int_\Omega (\nabla\Phi_1, \nabla\Phi_2)udx+\int_\Omega \Phi_1\Phi_2 \frac{u(u-1)}{\log u} dx,
\end{equation*}
with $\sigma_i=-\nabla\cdot(u\nabla\Phi_i)+\frac{u(u-1)}{\log u}\Phi_i$, $i=1,2$. The gradient flow of $\mathcal{G}$ in $(\mathcal{M}(\Omega), g)$ satisfies the Fisher-KPP equation 
\begin{equation*}
\partial_tu(t,x)=u(1-u)+\Delta u. 
\end{equation*}
And the dissipation of $\mathcal{G}(u)$ satisfies 
\begin{equation*}
\mathcal{I}(u)=\int_\Omega\|\nabla \log u\|^2  u dx+\int_\Omega |\log u|^2\frac{u(u-1)}{\log u} dx.  
\end{equation*}
And the Hamilton-Jacobi equation in $(\mathcal{M}(\Omega), g)$ follows 
\begin{equation*}
\partial_t\mathcal{U}(t,u)+\frac{1}{2}\int_\Omega \|\nabla\frac{\delta}{\delta u(x)}\mathcal{U}(t,u)\|^2 udx+\frac{1}{2}\int_\Omega |\frac{\delta}{\delta u(x)}\mathcal{U}(t,u)|^2\frac{u(u-1)}{\log u}dx+\mathcal{F}(u)=0.
\end{equation*}
Its ``characteristics'' in $(\mathcal{M}(\Omega), g)$ satisfy 
\begin{equation*}
\left\{\begin{aligned}
&\partial_tu+\nabla\cdot(u\nabla\Phi)-\frac{u(u-1)}{\log u}\Phi=0,\\
&\partial_t\Phi+\frac{1}{2}\|\nabla\Phi\|^2+|\Phi|^2\frac{(2u-1)\log u+1-u}{(\log u)^2}+\frac{\delta}{\delta u}\mathcal{F}(u)=0.
\end{aligned}\right.
\end{equation*}

\end{example}

\begin{example}[Allen-Cahn metric and Allen-Cahn equation]
Let $f\in C^2(\mathbb{R})$ be a given function. Consider 
\begin{equation*}
F(u)=u,\quad G(u)=f(u),\quad R(u)=-f'(u). 
\end{equation*}
Thus  
\begin{equation*}
V_1(u)=\frac{F'(u)}{G''(u)}=f''(u)^{-1},\quad V_2(u)=-\frac{R(u)}{G'(u)}=1.
\end{equation*}
The metric forms 
\begin{equation*}
g(u)(\sigma_1, \sigma_2)=\int_\Omega (\nabla\Phi_1, \nabla\Phi_2)f''(u)^{-1}dx+\int_\Omega \Phi_1\Phi_2 dx,
\end{equation*}
with $\sigma_i=-\nabla\cdot(f''(u)^{-1}\nabla\Phi_i)+\Phi_i$, $i=1,2$. The gradient flow of $\mathcal{G}$ in $(\mathcal{M}(\Omega), g)$ satisfies 
\begin{equation*}
\partial_tu(t,x)=\Delta u(t,x)-f'(u(t,x)). 
\end{equation*}
And the dissipation of $\mathcal{G}(u)$ satisfies 
\begin{equation*}
\mathcal{I}(u)=\int_\Omega\|\nabla f'(u)\|^2 f''(u)^{-1}dx+\int_\Omega |f'(u)|^2 dx.  
\end{equation*}
And the Hamilton-Jacobi equation in $(\mathcal{M}(\Omega), g)$ follows 
\begin{equation*}
\partial_t\mathcal{U}(t,u)+\frac{1}{2}\int_\Omega \|\nabla\frac{\delta}{\delta u(x)}\mathcal{U}(t,u)\|^2f''(u(x))^{-1}dx+\frac{1}{2}\int_\Omega |\frac{\delta}{\delta u(x)}\mathcal{U}(t,u)|^2dx+\mathcal{F}(u)=0.
\end{equation*}
Its ``characteristics'' in $(\mathcal{M}(\Omega), g)$ satisfy 
\begin{equation*}
\left\{\begin{aligned}
&\partial_tu+\nabla\cdot(f''(u)^{-1}\nabla\Phi)-\Phi=0,\\
&\partial_t\Phi-\frac{1}{2}\|\nabla\Phi\|^2\frac{f'''(u)}{f''(u)^{2}}+\frac{\delta}{\delta u}\mathcal{F}(u)=0.
\end{aligned}\right.
\end{equation*}
\end{example}

\section{Algorithms}\label{sec5}
In this section, we propose an algorithm to solve the mean-field information variational problem~\eqref{variation} in two dimensions. Section~\ref{subsection:PDHG} presents the main optimization tool we use to solve the variational problem. We use the primal-dual hybrid gradient (PDHG) algorithm~\cite{champock11,champock16}, which is a popular first-order optimization method to solve saddle point problems. More specifically, we use the general proximal primal dual hybrid gradient (G-prox PDHG) method from~\cite{JacobsLegerLiOsher2018_solving}, which is a variation of the PDHG algorithm with a precondition matrix. Section~\ref{subsection:implementation} shows the implementation of G-Prox PDHG algorithm to solve the variational problem. Section~\ref{ssec:linear_case} provides additional algorithm when $V_1$ and $V_2$ in~\eqref{variation} are affine functions with specific forms (see~\eqref{eq:V_1_V_2}).
In Section~\ref{subsection:discretization}, we give details of the discretization of the algorithms to solve the variational problem on a compact set in 2-dimensional space. This section also shows the solution of the algorithm is equivalent to the solution of an implicit finite difference scheme that is stable and convergent for all ratios of $\Delta t$ and $\Delta x$.

\subsection{PDHG for mean-field control problems}\label{subsection:PDHG}
We first review the PDHG algorithm. Consider the following convex optimization problem.
\[
    \begin{aligned}
        \min_z\, f(Az) + g(z),
    \end{aligned}
\]
where $z$ is a variable to be minimized, $f$ and $g$ are convex functions and $A$ is a linear operator. Recall the Legendre transform $f^*$ of $f$ is
\[
    f^*(p) = \sup_{z} \, \langle z, p \rangle - f(z).
\]
It is well-known that if $f$ is convex then $f^{**} = f$. Thus, we have
\[
    f(w) = f^{**}(w) = \sup_p \langle w, p \rangle - f^*(p). 
\]
Using this property of convex functions, the minimization problem can be converted to a saddle point problem
\begin{equation}\label{eq:saddle_point_probalem}
    \begin{aligned}
        \min_z\, \max_p\,  g(z) + \langle Az, p\rangle - f^*(p) =: \mathcal{L}(z,p)
    \end{aligned}
\end{equation}
where $\mathcal{L}$ is a Lagrangian functional. The PDHG algorithm solves the problem by iterating
\begin{equation}\label{eq:alg-PDHG}
    \begin{aligned}
        p^{(k+1)} &= \argmax_p \, \mathcal{L}(z^{(k+1)},p) - \frac{1}{2\sigma} \|p-p^{(k)}\|^2_{L^2}\\
        z^{(k+1)} &= \argmin_z \, \mathcal{L}(z,2 p^{(k+1)} - p^{(k)}) + \frac{1}{2\tau} \|z-z^{(k)}\|^2_{L^2}.
    \end{aligned}
\end{equation}
The scheme converges if the step sizes $\tau$ and $\sigma$ satisfy
\begin{equation} \label{eq:step-sizes-condition}
    \tau \sigma \|A^T A\|_{L^2} < 1,
\end{equation}
where $\|\cdot\|_{L^2}$ is the operator norm in $L^2$. G-Prox PDHG provides an appropriate choice of norms for the algorithm and the authors prove that choosing the proper norms allows the algorithm to have larger step sizes and faster convergence than the original PDHG algorithm. The G-prox PDHG iterates
\begin{equation}\label{eq:alg-G-Prox-PDHG}
    \begin{aligned}
        p^{(k+1)} &= \argmax_p \, \mathcal{L}(z^{(k+1)},p) - \frac{1}{2\sigma} \|p-p^{(k)}\|^2_{H},\\
        z^{(k+1)} &= \argmin_z \, \mathcal{L}(z,2 p^{(k+1)} - p^{(k)}) + \frac{1}{2\tau} \|z-z^{(k)}\|^2_{L^2}.
    \end{aligned}
\end{equation}
Note that the norm in the first line is changed to $H$ from $L^2$. The norm $\|\cdot\|_{H}$ is defined as
\[
    \|p\|^2_{H} = \|A^\top p\|^2_{L^2}.
\]
For example, in our problem, we define $z=(m_1,m_2,u)$ as a vector of functions, $p=\Phi$, and the linear operator $A$ as
\begin{equation}\label{eq:linear_op_A}
    A(m_1, m_2, u)(t,x) = \partial_t u(t,x) + \nabla \cdot m_1(t,x) - m_2(t,x).
\end{equation} 
Note that the differential operator $\partial_t$ and the divergence operator $\nabla \cdot$ are linear operators, which make $A$ a linear operator. 
We define the inner product as
\[
    \langle z_1, z_2 \rangle = \int^1_0 \int_\Omega (m_1)_1(t,x) \cdot (m_1)_2(t,x) + (m_2)_1(t,x) (m_2)_2(t,x) + u_1(t,x) u_2(t,x)  \,dx\,dt
\]
where $z_i = \big( (m_1)_i, (m_2)_i, u_i \big)$ for $i=1,2$,
and
\[
    \langle p_1, p_2 \rangle = \int^1_0 \int_\Omega \Phi_1(t,x) \Phi_2(t,x) \,dx\,dt.
\]
where $p_i = \Phi_i$ for $i=1,2$.
Thus, we have
\[
    \langle Az,p\rangle = \int^1_0 \int_\Omega \Phi(t,x)\,\Big(\partial_t u(t,x) +  \nabla \cdot m_1(t,x) - m_2(t,x)\Big)\,dx\,dt.
\]
These inner products induce the $L^2$ norm of $z$ and $p$, such that 
\begin{equation}\label{eq:l2_norm_of_z_p}
\begin{aligned}
    \|z\|^2_{L^2} &= \int^1_0 \int_\Omega |m_1(t,x) |^2 + m_2(t,x)^2 + u(t,x)^2\,dx\,dt\\
    \|p\|^2_{L^2} &= \int^1_0 \int_\Omega \Phi^2(t,x)\,dx\,dt
\end{aligned}
\end{equation}
and the $H$ norm of $p$ can be written as
\begin{equation}\label{eq:linear_op_A_explicit}
    \|p\|^2_H = \int^1_0 \int_\Omega (\partial_t \Phi(t,x))^2 + | \nabla \Phi(t,x)|^2 + ( \Phi(t,x))^2\,dx\,dt
\end{equation}
where the $A^\top$ is computed using integration by parts. With abuse of notation, we also define the $L^2$ norm of $m_1$, $m_2$, and $u$ as follows
\begin{equation}\label{eq:l2_norm_of_u_m1_m2}
    \begin{aligned}
        \|m_1\|^2_{L^2} &= \int^1_0 \int_\Omega |m_1(t,x)|^2 \, dx\,dt\\
        \|m_2\|^2_{L^2} &= \int^1_0 \int_\Omega m_2(t,x)^2 \, dx\,dt\\
        \|u\|^2_{L^2} &= \int^1_0 \int_\Omega u(t,x)^2 \, dx\,dt\\
    \end{aligned}
\end{equation}
By choosing such norm based on~\cite{JacobsLegerLiOsher2018_solving}, the step sizes of the algorithm only need to satisfy
\[
    \sigma \tau < 1,
\]
which is independent of the operator $A$. From the definition of $A$ in \eqref{eq:linear_op_A}, the operator involves an unbounded operator $\nabla$. Thus, this step size condition in PDHG allows us to run the algorithm with larger step sizes independent of the grid sizes.

\subsection{Implementations of the algorithm}\label{subsection:implementation}

To implement the algorithm in the variational problem \eqref{variation}, we define $z$, $p$, and the linear operator $A$ as above.
Furthermore, we set convex functionals $g$ as
\begin{equation}
    \begin{split}
        g(m_1,m_2,u) &= \int^1_0 \int_\Omega \frac{|m_1(t,x)|^2}{2V_1(u(t,x))}+\frac{m_2(t,x)^2}{2V_2(u(t,x))}dx - \mathcal{F}(u(t,\cdot)) \, dt + \mathcal{G}(u(1,\cdot))
    \end{split}
\end{equation}
where the functional $\mathcal{F}$ is of the form
\[
    \mathcal{F}(u(t,\cdot)) = - \int_\Omega s(u(t,x)) \,dx
\]
and the terminal functional $\mathcal{G}$ is of the form
\[
    \mathcal{G}(u(1,\cdot)) = \int_\Omega G(u(1,x)) \,dx
\]
where $s,G:\mathbb{R} \rightarrow \mathbb{R}$ are convex functions. Set a convex functional $f$ as
\begin{equation}
    \begin{split}
        f\left(w\right) &= \int^1_0 \int_\Omega i_\infty \big(w(t,x)\big)\,dx\,dt\\
    i_\infty(t) &= \begin{cases}
     0 & \text{if } t = 0\\
     \infty & \text{otherwise}.
    \end{cases}
    \end{split}
\end{equation}
From the definition of function $f$, we can compute the Legendre transform of $f$
\begin{equation*}
    f^*(\Phi) = \sup_{w} \int^1_0 \int_\Omega \Phi(t,x)\, w(t,x) - i_\infty(w(t,x)) \,dx\,dt = 0.
\end{equation*}
Thus, from the definition of a Lagrangian functional in~\eqref{eq:experiment_lagrangian}, we have
\begin{equation}\label{eq:experiment_lagrangian}
    \begin{aligned}
        &\mathcal{L}(m_1, m_2,u,\Phi) \\
        &= g(m_1,m_2,u) + \langle A(m_1,m_2,u), \Phi \rangle - f^*(\Phi)\\    
        &= \int_0^1 \int_\Omega \Big\{ \frac{|m_1(t,x)|^2}{2V_1(u(t,x))}+\frac{m_2(t,x)^2}{2V_2(u(t,x))}\\
&\hspace{1cm}+\Phi(t,x)\Big(\partial_t u(t,x)+ \nabla\cdot m_1(t,x)-m_2(t,x)\Big)\Big\} dx  - \mathcal{F}(u(t,\cdot)) \,dt + \mathcal{G}(u(1,\cdot)).
    \end{aligned}
\end{equation}
G-Prox PDHG computes the saddle point $(m_1^*, m_2^*, u^*, \Phi^*)$ by iterating
\begin{equation}\label{eq:algorithm}
\begin{aligned}
    \Phi^{(k+ 1)}&= \argmax_{\Phi:[0,1]\times\Omega\rightarrow\mathbb{R}} \mathcal{L}(m_1^{(k)}, m_2^{(k)}, u^{(k)}, \Phi) - \frac{1}{2\sigma} \|\Phi - \Phi^{(k)}\|^2_{H}\\
    m_1^{(k+1)}&= \argmin_{m_1:[0,1]\times\Omega\rightarrow\mathbb{R}} \mathcal{L}(m_1, m_2^{(k)},u^{(k)},   2 \Phi^{(k+1)} - \Phi^{(k)}) + \frac{1}{2\tau} \|m_1 - m_1^{(k)}\|^2_{L^2}\\
    m_2^{(k+1)}&= \argmin_{m_2:[0,1]\times\Omega\rightarrow\mathbb{R}^d} \mathcal{L}(m_1^{(k)}, m_2,u^{(k)},   2 \Phi^{(k+1)} - \Phi^{(k)}) + \frac{1}{2\tau} \|m_2 - m_2^{(k)}\|^2_{L^2}\\
    u^{(k+1)} &= \argmin_{u:[0,1]\times\Omega\rightarrow\mathbb{R}} \mathcal{L}(m_1^{(k+1)}, m_2^{(k+1)}, u,   2 \Phi^{(k+1)} - \Phi^{(k)}) + \frac{1}{2\tau} \|u - u^{(k)}\|^2_{L^2}.
\end{aligned}
\end{equation}
Here, $L^2$ and $H$ norms are defined in~\eqref{eq:linear_op_A_explicit} and~\eqref{eq:l2_norm_of_u_m1_m2}.

From the optimality conditions, we can find the explicit formula for each variable $\Phi^{(k+1)},m_1^{(k+1)},m_2^{(k+1)},u^{(k+1)}(t,\cdot)$.

\begin{proposition}\label{prop:explicit-formula}
The variables $m_1^{(k+1)},m_2^{(k+1)},\Phi^{(k+1)},u^{(k+1)}(1,\cdot)$ from~\eqref{eq:algorithm} satisfy the following explicit formulas:
\begin{align*}
    \Phi^{(k+1)}(t,x) &= \Phi^{(k)}(t,x) + \sigma (A A^T)^{-1} \Bigl( \partial_t u^{(k)}(t,x) + \nabla \cdot m_1^{(k)}(t,x) - m_2^{(k)}(t,x) \Bigr),\\
   m_1^{(k+1)}(t,x) &= \frac{V_1(u^{(k)}(t,x))}{\tau + V_1(u^{(k)}(t,x))} \left( m_1^{(k)}(t,x) + \tau \nabla \big(2\Phi^{(k+1)}(t,x) - \Phi^{(k)}(t,x) \big) \right),\\
   m_2^{(k+1)}(t,x) &= \frac{V_2(u^{(k)}(t,x))}{\tau + V_2(u^{(k)}(t,x))} \left( m_2^{(k)}(t,x) + \tau \big(2\Phi^{(k+1)}(t,x) - \Phi^{(k)}(t,x)\big) \right),
\end{align*}
for $(t,x)\in[0,1]\times\Omega$ and
\[
    u^{(k+1)}(1,x) = (G^*)'(-2\Phi^{(k+1)}(1,x)+\Phi^{(k)}(1,x)),\quad x\in\Omega.
\]
\end{proposition}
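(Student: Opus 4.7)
The plan is to derive each explicit formula as a first-order optimality condition of the corresponding subproblem in~\eqref{eq:algorithm}, exploiting the fact that, after integration by parts, the Lagrangian decouples pointwise in each of $m_1$, $m_2$, and the boundary value $u(1,\cdot)$.

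First I would handle the $\Phi$-update. Since $f^*\equiv 0$ by construction, the objective reduces to $\langle A z^{(k)},\Phi\rangle - \frac{1}{2\sigma}\|A^\top(\Phi-\Phi^{(k)})\|_{L^2}^2$ with $z^{(k)}=(m_1^{(k)},m_2^{(k)},u^{(k)})$. The Euler--Lagrange equation reads $A z^{(k)} = \tfrac{1}{\sigma}AA^\top(\Phi-\Phi^{(k)})$, so that $\Phi^{(k+1)} = \Phi^{(k)} + \sigma (AA^\top)^{-1}A z^{(k)}$; unpacking the definition~\eqref{eq:linear_op_A} of $A$ gives the stated formula. (Here $AA^\top = -\partial_{tt}-\Delta+I$ is uniformly elliptic under the periodic boundary conditions, so the inverse is well-defined.)

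Next I turn to $m_1^{(k+1)}$ and $m_2^{(k+1)}$. Writing $\bar\Phi := 2\Phi^{(k+1)}-\Phi^{(k)}$ and integrating by parts in space transforms the coupling $\int\bar\Phi\,\nabla\cdot m_1$ into $-\int\nabla\bar\Phi\cdot m_1$, after which the $m_1$-subproblem becomes the pointwise quadratic
\[
\min_{m_1}\;\frac{|m_1|^2}{2V_1(u^{(k)})} - \nabla\bar\Phi\cdot m_1 + \frac{1}{2\tau}\bigl|m_1-m_1^{(k)}\bigr|^2.
\]
Setting the gradient to zero and solving for $m_1$ yields the stated formula; the $m_2$-update is identical, only without the spatial divergence, so $\nabla\bar\Phi$ is replaced by $\bar\Phi$.

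Finally, for $u^{(k+1)}(1,\cdot)$ I would integrate $\int_0^1\!\!\int\bar\Phi\,\partial_t u$ by parts in time to extract the boundary contribution $\int\bar\Phi(1,x)u(1,x)\,dx$ (the $t=0$ term is frozen by the initial condition). Because $\{1\}\times\Omega$ is a null set inside $[0,1]\times\Omega$, the $L^2$ proximal term $\frac{1}{2\tau}\|u-u^{(k)}\|_{L^2}^2$ contributes nothing at $t=1$, so the subproblem in $u(1,x)$ is the pointwise minimization of $G(u)+\bar\Phi(1,x)\,u$. The optimality condition $G'(u)=-\bar\Phi(1,x)$ combined with the Legendre duality $(G')^{-1}=(G^*)'$ (valid since $G$ is strictly convex) produces the advertised formula. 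The main obstacle is the bookkeeping around integration by parts: ensuring that the spatial boundary terms vanish under the periodic assumption, verifying that the temporal boundary at $t=1$ is truly decoupled from the bulk evolution of $u$, and confirming that $V_1(u^{(k)}),V_2(u^{(k)})>0$ so the quadratic subproblems in $m_1,m_2$ are strictly convex. Once these points are settled, each formula follows from a routine first-order calculation.
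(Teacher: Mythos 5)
Your proposal is correct and follows exactly the route the paper intends: the paper states Proposition~\ref{prop:explicit-formula} without a written proof, asserting only that the formulas come ``from the optimality conditions,'' and your pointwise first-order computations for the $\Phi$, $m_1$, $m_2$, and $u(1,\cdot)$ subproblems (after integration by parts and using $f^*\equiv 0$, $(G')^{-1}=(G^*)'$) supply precisely that omitted verification, mirroring the discrete derivation the paper does carry out in Proposition~\ref{prop:discrete_optimality}. The only cosmetic mismatch is the boundary condition you invoke for the vanishing of spatial boundary terms (the numerics section uses no-flux/Neumann rather than periodic), which does not affect the argument.
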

The variable $u^{(k+1)}$ also satisfies the following optimality condition:
\begin{equation}\label{eq:newton_method_equation}
    \begin{aligned}
     - \frac{\|m_1^{(k+1)}\|^2}{2V_1^2(u^{(k+1)})} V_1'(u^{(k+1)}) &- \frac{|m_2^{(k+1)}|^2}{2V_2^2(u^{(k+1)})} V_2'(u^{(k+1)})\\
     &- \partial_t (2\Phi^{(k+1)} - \Phi^{(k)}) - s' (u^{(k+1)}) + \frac{1}{\tau} (u^{(k+1)}-u^{(k)})= 0
    \end{aligned}
\end{equation}
for $(t,x) \in [0,1]\times \Omega$. As one can see, the solution $u^{(k+1)}$ of equation depends on $V_1$, $V_2$, and $\mathcal{F}$. We first present the algorithm for general $V_1$, $V_2$ and $\mathcal{F}$.  Assuming $V_1$, $V_2$ and $\mathcal{F}$ are smooth, we can compute $u^{(k+1)}$ using Newton's method.
\begin{tabbing}\label{alg:newton-u}
aaaaa\= aaa \=aaa\=aaa\=aaa\=aaa=aaa\kill  
   \rule{\linewidth}{0.8pt}\\
   \noindent{\large\bf Algorithm 1: Newton's method to compute $u^{(k+1)}(t,x)$}\\
   \textbf{Input}: $u^{(k)}(t,x), m_1(t,x), m_2(t,x), \partial_t \Phi(t,x)$.\\
   \textbf{Output}: $u^{(k+1)}(t,x)$.\\
   \rule{\linewidth}{0.5pt}\\
   Initialize $u^0 = u^{(k)}(t,x)$ and set $\epsilon>0$.\\
   \+\+\textbf{For} $j\in\mathbb{N}$\\
   [2ex]   
   \' $a$ \'  $= - \frac{\|m_1\|^2}{2V_1^2(u^{j})} V_1'(u^{j}) - \frac{|m_2|^2}{2V_2^2(u^{j})} V_2'(u^{j}) - \partial_t \Phi - s' (u^{j}) + \frac{1}{\tau} (u^{j}-u^{(k)});$\\
    [2ex]   
  \' $b$ \'  $= \frac{\|m_1\|^2}{V_1^3(u^{j})} (V_1'(u^{j}))^2 - \frac{\|m_1\|^2}{2 V_1^2(u^{j})} V_1''(u^{j}) + \frac{|m_2|^2}{V_2^3(u^{j})} (V_2'(u^{j}))^2 - \frac{|m_2|^2}{2 V_2^2(u^{j})} V_2''(u^{j})  + \frac{c}{u^{j}} + \frac{1}{\tau};$\\
    [2ex] 
    \-\- \' $u^{j+1}$ \'  $= \max(0, u^{j} - \frac{a}{b});$\\
    [2ex] 
    \hspace{0.6cm} Stop the iteration when $\left|\frac{a}{b}\right| < \epsilon$.\\
    [2ex] 
   \rule{\linewidth}{0.5pt}
\end{tabbing}
In the algorithm~1, we omit $(t,x)$ in $u^j$, $u^{j+1}$, $u^{(k)}$, $m_1$, $m_2$, $\partial_t \Phi$ to simplify the notations.
Using Proposition~\ref{prop:explicit-formula}, we can rewrite the algorithm~\eqref{eq:algorithm}. 
\newpage

\begin{tabbing}\label{alg:gproxPDHG}
aaaaa\= aaa \=aaa\=aaa\=aaa\=aaa=aaa\kill  
   \rule{\linewidth}{0.8pt}\\
   \noindent{\large\bf Algorithm 2: G-prox PDHG for mean-field information variational problem}\\
   \textbf{Input}: Initial density $u_0$.\\
   \textbf{Output}: $u, \Phi, m_2:[0,T]\times\Omega \rightarrow \mathbb{R}$, $m_1:[0,T]\times\Omega \rightarrow \mathbb{R}^d$.\\
   \rule{\linewidth}{0.5pt}\\
   \+\+\textbf{For} $k\in\mathbb{N}$\\
   [2ex]   
   \+\+\+ \hspace{-1.5cm} \textbf{For} $(t,x)\in[0,1]\times\Omega$\\
   [2ex]   
      \' $\Phi^{(k+ 1)}(t,x)$ \' $= \Phi^{(k)}(t,x) + \sigma (A A^T)^{-1} \Bigl( \partial_t u^{(k)}(t,x) + \nabla \cdot m_1^{(k)}(t,x) - m_2^{(k)}(t,x) \Bigr);$\\
    [2ex]   
      \' $m_1^{(k+1)}(t,x)$ \' $= \frac{V_1(u^{(k)}(t,x))}{\tau + V_1(u^{(k)}(t,x))} \left( m_1^{(k)} (t,x) + \tau \nabla \big(2\Phi^{(k+1)}(t,x) - \Phi^{(k)}(t,x) \big) \right);$\\
    [2ex]   
    \' $m_2^{(k+1)}(t,x)$ \' $= \frac{V_2(u^{(k)}(t,x))}{\tau + V_2(u^{(k)}(t,x))} \left( m_2^{(k)}(t,x) + \tau \big(2\Phi^{(k+1)}(t,x) -\Phi^{(k)} (t,x)\big) \right);$\\
   [2ex]   
    \' $u^{(k+1)}(t,x)$ \' $\leftarrow$ Compute using Algorithm~1  \\
     \hspace{2cm}  with $m_1=m_1^{(k+1)}, m_2=m_2^{(k+1)}, \Phi = 2\Phi^{(k+1)} - \Phi^{(k)};$ \\
    [2ex]
   \-\-\-\-\- \' $u^{(k+1)}(1,x)$ \' $ = (G^*)'(-2\Phi^{(k+1)}(1,x)+\Phi^{(k)}(1,x))$.\\
  [1ex]
   \rule{\linewidth}{0.5pt}
\end{tabbing}

The convergence of Newton's method from Algorithm~1 depends on functions $V_1$, $V_2$ and a functional $\mathcal{F}$. In the numerical experiments, we use 
\begin{equation*}
 \mathcal{F}(u(t,\cdot))(x) =  -c \int_\Omega \Big(u(t,x) \log u(t,x) - u(t,x)\Big)\, dx,  
\end{equation*}
 with a given positive constant $c$. This functional regularizes equation~\eqref{eq:newton_method_equation}. Thus the algorithm converges faster. In the actual numerical experiments, Newton's method only requires less than $10$ iterations to reach the residual error $|a/b|$ less than $10^{-10}$.  We use FFTW library to compute $(A A^T)^{-1}$ by Fast Fourier Transform (FFT). It takes $O(n\log n)$ operations per iteration, where $n$ is the number of points in both time and spatial grids. Overall, the algorithm takes just $O(n\log n)$ operations per iteration.

In the next subsection, we present an alternative algorithm when $V_1$ and $V_2$ are linear functions with specific forms. The optimality condition for $u^{(k+1)}$ leads to a third-order polynomial equation which has an analytical solution and can easily be computed without using the second-order optimization method just as in Algorithm~1. 

\subsection{Affine Case}\label{ssec:linear_case}

Suppose $V_1$ and $V_2$ are affine functions, such that
\begin{equation}\label{eq:V_1_V_2}
    V_1(u) = c_1 (u + c_3),\quad V_2(u) = c_2 (u + c_3)
\end{equation}
where $c_1,c_2,c_3$ are constants, and the functional $\mathcal{F}$ is of the form
\[
    \mathcal{F}(u(t,\cdot)) = - \int_\Omega s(u(t,x)) \,dx
\]
where $s:\mathbb{R} \rightarrow \mathbb{R}$ is a smooth convex function. Note that from the algorithm~\eqref{eq:algorithm}, the optimality condition for $u^{(k+1)}$ involves a functional $\mathcal{F}$. We can get rid of a functional $\mathcal{F}$ from the optimality condition  for $u^{(k+1)}$ by introducing an extra Lagrangian multiplier $\Psi:[0,1]\times \Omega \rightarrow \mathbb{R}$ to the saddle point problem~\eqref{eq:saddle_point_probalem}.
\begin{equation*}
    \begin{aligned}
        &\inf_{m_1,m_2,u} \sup_{\Phi,\Psi} \mathcal{L}(m_1, m_2,u,\Phi,\Psi)\\
        = &\inf_{m_1,m_2,u} \sup_{\Phi,\Psi} \int_0^1 \int_\Omega \Big\{ \frac{|m_1(t,x)|^2}{2V_1(u(t,x))}+\frac{m_2(t,x)^2}{2V_2(u(t,x))}\\
    &\hspace{2cm} + \Phi(t,x)\Big(\partial_t u(t,x)+ \nabla\cdot m_1(t,x)-m_2(t,x)\Big)\Big\} dx\,dt\\
    &\hspace{1.5cm} + \int^1_0 \int_\Omega \Psi(t,x) u(t,x)  - s^*\big(\Psi(t,x)\big)\,dx\,dt + \mathcal{G}(u(1,\cdot)).
    \end{aligned}
\end{equation*}
Using G-Prox PDHG algorithm, the saddle point can be computed by iterating
\begin{equation}\label{eq:algorithm_linear}
\begin{aligned}
    \Phi^{(k+ 1)}&= \argmax_\Phi \mathcal{L}(m_1^{(k)}, m_2^{(k)}, u^{(k)}, \Phi, \Psi^{(k)}) - \frac{1}{2\sigma} \|\Phi - \Phi^{(k)}\|^2_{H}\\
    \Psi^{(k+ 1)}&= \argmax_\Psi \mathcal{L}(m_1^{(k)}, m_2^{(k)}, u^{(k)}, \Phi^{(k)}, \Psi) - \frac{1}{2\sigma} \|\Psi - \Psi^{(k)}\|^2_{L^2}\\
    m_1^{(k+1)}&= \argmin_m \mathcal{L}(m, m_2^{(k)},u^{(k)},   2 \Phi^{(k+1)} - \Phi^{(k)},  2 \Psi^{(k+1)} - \Psi^{(k)}) + \frac{1}{2\tau} \|m - m_1^{(k)}\|^2_{L^2}\\
    m_2^{(k+1)}&= \argmin_m \mathcal{L}(m_1^{(k)}, m,u^{(k)},   2 \Phi^{(k+1)} - \Phi^{(k)},  2 \Psi^{(k+1)} - \Psi^{(k)}) + \frac{1}{2\tau} \|m - m_2^{(k)}\|^2_{L^2}\\
    u^{(k+1)} &= \argmin_u \mathcal{L}(m_1^{(k+1)}, m_2^{(k+1)}, u,   2 \Phi^{(k+1)} - \Phi^{(k)},  2 \Psi^{(k+1)} - \Psi^{(k)}) + \frac{1}{2\tau} \|u - u^{(k)}\|^2_{L^2}.
\end{aligned}
\end{equation}

From the optimality conditions, we can find the explicit formula for each variable $\Phi^{(k+1)}, \Psi^{(k+1)},m_1^{(k+1)},m_2^{(k+1)}$.

\begin{proposition}\label{prop:explicit-formula_linear}
The variables $m_1^{(k+1)},m_2^{(k+1)},\Phi^{(k+1)},\Psi^{(k+1)},u^{(k+1)}(1,\cdot)$ from~\eqref{eq:algorithm_linear} satisfy the following explicit formulas:
\begin{align*}
    \Phi^{(k+1)}(t,x) &= \Phi^{(k)}(t,x) + \sigma (A A^T)^{-1} \Bigl( \partial_t u^{(k+1)}(t,x) + \nabla \cdot m_1^{(k+1)}(t,x) - m_2^{(k+1)}(t,x) \Bigr),\\
    \Psi^{(k+1)}(t,x) &= \big({\textrm{Id}} + \sigma (s^*)'\big)^{-1} \big(\Psi^{(k)}(t,x) + \sigma u^{(k)}(t,x) \big),\\
   m_1^{(k+1)}(t,x) &= \frac{V_1(u^{(k)}(t,x))}{\tau + V_1(u^{(k)}(t,x))} \left( m_1^{(k)}(t,x) + \tau \nabla \big(2\Phi^{(k+1)}(t,x) - \Phi^{(k)}(t,x)\big) \right),\\
   m_2^{(k+1)}(t,x) &= \frac{V_2(u^{(k)}(t,x))}{\tau + V_2(u^{(k)}(t,x))} \left( m_2^{(k)}(t,x) + \tau \big(2\Phi^{(k+1)}(t,x) - \Phi^{(k)}(t,x)\big) \right),
\end{align*}
for $(t,x)\in[0,1]\times\Omega$,
\[
    u^{(k+1)}(1,x) = (G^*)'(- 2\Phi^{(k+1)}(1,x) + \Phi^{(k)}(1,x)),\quad x\in\Omega,
\]
and ${\textrm{Id}}$ is an identity operator.
\end{proposition}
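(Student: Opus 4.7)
The plan is to derive each closed-form update by writing the first-order optimality condition for the corresponding subproblem in \eqref{eq:algorithm_linear} and exploiting that, after freezing the other variables, each subproblem is strictly convex/concave (indeed pointwise quadratic, modulo the $H$-norm used for $\Phi$). I treat the five formulas in the order the algorithm updates them. For $\Phi^{(k+1)}$, the $\Phi$-dependence of $\mathcal{L}$ is purely linear through $\langle A(m_1^{(k)},m_2^{(k)},u^{(k)}),\Phi\rangle$, while the prox penalty is $-\frac{1}{2\sigma}\|A^{\ts}(\Phi-\Phi^{(k)})\|_{L^2}^2$ by definition of $\|\cdot\|_H$. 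Setting the $L^2$ gradient with respect to $\Phi$ to zero and using $A^{\ts\ts}=A$ yields $A(m_1^{(k)},m_2^{(k)},u^{(k)})=\tfrac{1}{\sigma}AA^{\ts}(\Phi^{(k+1)}-\Phi^{(k)})$, and inverting the elliptic operator $AA^{\ts}$, which from \eqref{eq:linear_op_A_explicit} is a strictly positive second-order operator, produces the stated formula.

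For $\Psi^{(k+1)}$ only the pointwise terms $\Psi u^{(k)}-s^*(\Psi)$ and the $L^2$ prox survive, so the first-order condition reads $u^{(k)}-(s^*)'(\Psi)-\tfrac{1}{\sigma}(\Psi-\Psi^{(k)})=0$, which I rearrange into $(\mathrm{Id}+\sigma(s^*)')(\Psi^{(k+1)})=\Psi^{(k)}+\sigma u^{(k)}$ and then invert, noting that the operator is invertible because $s^*$ is convex. For $m_1^{(k+1)}$ and $m_2^{(k+1)}$, I substitute $\bar\Phi:=2\Phi^{(k+1)}-\Phi^{(k)}$ and integrate by parts in $x$ to rewrite $\int\bar\Phi\,\nabla\cdot m_1\,dx = -\int\nabla\bar\Phi\cdot m_1\,dx$ (periodic boundary). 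Each subproblem then becomes pointwise in $(t,x)$: the $m_1$ first-order condition $m_1\bigl(\tfrac{1}{V_1(u^{(k)})}+\tfrac{1}{\tau}\bigr)=\nabla\bar\Phi+\tfrac{m_1^{(k)}}{\tau}$ solves directly to the claimed expression, and the computation for $m_2^{(k+1)}$ is identical with $\nabla\bar\Phi$ replaced by $\bar\Phi$ and $V_1$ by $V_2$.

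Finally, for $u^{(k+1)}(1,\cdot)$ I use that the terminal functional $\mathcal{G}(u(1,\cdot))=\int G(u(1,x))\,dx$ is the only contribution depending on $u|_{t=1}$ once the temporal integration by parts of $\int_0^1\bar\Phi\,\partial_tu\,dt$ has produced the boundary term $\int\bar\Phi(1,x)u(1,x)\,dx$. The first-order condition at $t=1$ then reduces to $G'(u^{(k+1)}(1,x))+\bar\Phi(1,x)=0$, and inverting via the Legendre identity $(G')^{-1}=(G^*)'$ for convex $G$ yields the stated formula. The main obstacle I expect is the careful bookkeeping in this last step: one has to split the $u$-subproblem cleanly into its $(0,1)\times\Omega$ bulk part and its $\{1\}\times\Omega$ terminal part, verifying that all bulk prox and cost contributions at $t=1$ integrate to zero so that only the boundary identity with $G'$ survives. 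The remaining computations reduce to pointwise algebra and standard proximal-operator manipulations.
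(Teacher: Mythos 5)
Your proposal is correct and follows exactly the route the paper intends: the paper states Proposition~\ref{prop:explicit-formula_linear} without a written proof, asserting only that the formulas come ``from the optimality conditions,'' and your pointwise first-order-condition computations for each prox subproblem (with integration by parts in $x$ for the $m_1$ update and in $t$ for the terminal slice of $u$) are precisely that derivation, mirroring the discrete computation carried out in Proposition~\ref{prop:discrete_optimality}. One remark: your computation correctly yields $\partial_t u^{(k)}+\nabla\cdot m_1^{(k)}-m_2^{(k)}$ inside the $\Phi$-update, consistent with \eqref{eq:algorithm_linear}, Algorithm~3, and Proposition~\ref{prop:explicit-formula}, so the $(k+1)$ superscripts displayed in that line of the statement of Proposition~\ref{prop:explicit-formula_linear} should be read as a typo rather than something your argument needs to reproduce.
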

For the explicit formula of $\Psi^{(k+1)}$, if $s(t) = \frac{c}{2} t^2$ with $c>0$, then the formula can be simplified to
\[
    \Psi^{(k+1)}(t,x) = \frac{1}{1+\sigma/c} \big( \Psi^{(k)}(t,x) + \sigma u^{(k)}(t,x) \big).
\]
The following proposition shows the equation from the optimality condition of $u^{(k+1)}$ from~\eqref{eq:algorithm}.

\begin{proposition}\label{prop:optimality_u_linear}
    If $V_1$ and $V_2$ are affine functions in~\eqref{eq:V_1_V_2},
    $u^{(k+1)}(t,x)$ satisfies the following equation for $(t,x)\in[0,1]\times\Omega$:
    \begin{equation}\label{eq:optimality_of_u_linear}
        \big(u^{(k+1)}(t,x)\big)^3 + \big(u^{(k+1)}(t,x) 
        \big)^2 ( 2c_3 + k_2) + u^{(k+1)}(t,x) (c_3^2 + 2c_3 k_2) + (k_1 + k_2 c_3^2) = 0
    \end{equation}
    where
    \[
    \begin{aligned}
        k_1 (t,x) &= - \frac{\tau |m_1^{(k+1)}(t,x)|^2}{2c_1} - \frac{\tau m_2^{(k+1)}(t,x)^2}{2c_2},
        \\
        k_2 (t,x) &= - \tau \partial_t \big(2 \Phi^{(k+1)}(t,x) - \Phi^{(k)}(t,x) \big) + \tau \big(2 \Psi^{(k+1)}(t,x) - \Psi^{(k)}(t,x) \big) - u^{(k)}(t,x).
    \end{aligned}
    \]
\end{proposition}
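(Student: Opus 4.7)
The plan is to derive the equation by writing down the first-order optimality condition for $u^{(k+1)}$ in the $u$-subproblem of algorithm~\eqref{eq:algorithm_linear}, specializing to the affine choices $V_1(u) = c_1(u+c_3)$, $V_2(u) = c_2(u+c_3)$, and then clearing denominators to obtain a cubic polynomial equation in $u^{(k+1)}$.

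First, I would take the $L^2$ first variation of the $u$-update functional
\[
    \mathcal{L}(m_1^{(k+1)}, m_2^{(k+1)}, u, 2\Phi^{(k+1)} - \Phi^{(k)}, 2\Psi^{(k+1)} - \Psi^{(k)}) + \frac{1}{2\tau}\|u - u^{(k)}\|^2_{L^2}
\]
with respect to $u$ at $u=u^{(k+1)}$. After an integration by parts that moves the time derivative off $u$ and onto $\Phi$, the pointwise Euler--Lagrange equation reads
\[
    -\frac{|m_1^{(k+1)}|^2}{2V_1^2(u^{(k+1)})} V_1'(u^{(k+1)}) - \frac{|m_2^{(k+1)}|^2}{2V_2^2(u^{(k+1)})} V_2'(u^{(k+1)}) - \partial_t(2\Phi^{(k+1)} - \Phi^{(k)}) + (2\Psi^{(k+1)} - \Psi^{(k)}) + \frac{1}{\tau}(u^{(k+1)} - u^{(k)}) = 0.
\]
Note the term $2\Psi^{(k+1)} - \Psi^{(k)}$ replaces $-s'(u^{(k+1)})$ compared with~\eqref{eq:newton_method_equation}, because in the affine-$V$ algorithm~\eqref{eq:algorithm_linear} the functional $\mathcal{F}$ has been dualized into the multiplier $\Psi$ via the Legendre transform $s^*$.

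Next, I substitute the affine forms $V_1'(u) = c_1$, $V_2'(u) = c_2$, $V_1^2(u) = c_1^2(u+c_3)^2$, $V_2^2(u) = c_2^2(u+c_3)^2$, so that the first two terms simplify to $-\frac{|m_1^{(k+1)}|^2}{2c_1(u^{(k+1)}+c_3)^2} - \frac{|m_2^{(k+1)}|^2}{2c_2(u^{(k+1)}+c_3)^2}$. Multiplying the entire equation through by $\tau$ and grouping the $u^{(k+1)}$-independent pieces into the functions $k_1$ and $k_2$ as defined in the statement, the optimality condition becomes the compact scalar identity
\[
    \frac{k_1}{(u^{(k+1)}+c_3)^2} + k_2 + u^{(k+1)} = 0.
\]

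Finally, I clear the denominator by multiplying through by $(u^{(k+1)}+c_3)^2$ and expand $(u^{(k+1)}+c_3)^2 = (u^{(k+1)})^2 + 2 c_3 u^{(k+1)} + c_3^2$ inside the product with $(u^{(k+1)} + k_2)$. Collecting powers of $u^{(k+1)}$ yields exactly \eqref{eq:optimality_of_u_linear}. The computation is routine and raises no real obstacle; the only items requiring care are the sign in the integration by parts on $\Phi \partial_t u$ (which is why the leading derivative enters as $-\partial_t(2\Phi^{(k+1)}-\Phi^{(k)})$) and the bookkeeping that $V_1'$ and $V_2'$ are constants so that no cross terms survive in the cubic expansion.
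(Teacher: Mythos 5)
Your proposal is correct and follows essentially the same route as the paper: both start from the first-order optimality condition of the $u$-update in the affine algorithm~\eqref{eq:algorithm_linear} (with $\mathcal{F}$ dualized into $\Psi$), substitute $V_1'=c_1$, $V_2'=c_2$, and clear the denominator $(u^{(k+1)}+c_3)^2$ to obtain the cubic. You simply spell out the "simple algebra" that the paper leaves implicit, and your bookkeeping of $k_1$, $k_2$ and the expansion of $(u^{(k+1)}+k_2)(u^{(k+1)}+c_3)^2$ checks out.
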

\begin{proof}
    The optimality condition for $u^{(k+1)}$ from~\eqref{eq:algorithm} gives
    \begin{equation*}
        - \frac{|m_1^{(k+1)}|^2}{2c_1 (u^{(k+1)}+c_3)^2} - \frac{(m_2^{(k+1)})^2}{2c_2 (u^{(k+1)}+c_3)^2}
        - \partial_t (2\Phi^{(k+1)} - \Phi^{(k)})
        + 2\Psi^{(k+1)} - \Psi^{(k)} + \frac{1}{\tau} (u^{(k+1)} - u^{(k)}) = 0.
    \end{equation*}
    Simple algebra leads to~\eqref{eq:optimality_of_u_linear}. 
    
\end{proof}
From Proposition~\ref{prop:optimality_u_linear}, the optimality condition for $u^{(k+1)}$ leads to the third order polynomial which has an analytical solution. Thus, $u^{(k+1)}$ can be computed through
\[
    u^{(k+1)}(t,x) = root_+(2c_3 +k_2, c_3^2 + 2c_3 k_2, k_1+k_2 c_3^2),\quad (t,x)\in[0,1] \times \Omega
\]
where $root_+(q_1, q_2, q_3)$ is a positive root of a third-order polynomial $x^3 + q_1 x^2 + q_2 x + q_3 = 0$. Together with Proposition~\ref{prop:explicit-formula_linear} and~\ref{prop:optimality_u_linear}, we present the algorithm for the linear case.
\begin{tabbing}\label{alg:gproxPDHG_linear}
aaaaa\= aaa \=aaa\=aaa\=aaa\=aaa=aaa\kill  
   \rule{\linewidth}{0.8pt}\\
   \noindent{\large\bf Algorithm~3: G-prox PDHG for mean-field information variational problem}\\
   \noindent{\hspace{2.7cm} \large\bf with $V_1$, $V_2$ defined in~\eqref{eq:V_1_V_2}}\\
   \textbf{Input}: Initial density $u_0$.\\
   \textbf{Output}: $u, \Phi, m_2:[0,T]\times\Omega \rightarrow \mathbb{R}$, $m_1:[0,T]\times\Omega \rightarrow \mathbb{R}^d$.\\
   \rule{\linewidth}{0.5pt}\\
   \+\+\textbf{For} $k\in\mathbb{N}$\\
   [2ex]   
   \+\+ \hspace{-1.5cm} \textbf{For} $(t,x)\in[0,1]\times\Omega$\\
   [2ex]   
      \' $\Phi^{(k+ 1)}(t,x)$ \' $= \Phi^{(k)}(t,x) + \sigma (A A^T)^{-1} \Bigl( \partial_t u^{(k)}(t,x) + \nabla \cdot m_1^{(k)}(t,x) - m_2^{(k)}(t,x) \Bigr);$\\
   [2ex]   
      \' $\Psi^{(k+1)}(t,x)$ \' $= \big({\textrm{Id}} + \sigma (s^*)'\big)^{-1} \big(\Psi^{(k)}(t,x) + \sigma u^{(k)}(t,x) \big);$\\
    [2ex]   
      \' $m_1^{(k+1)}(t,x)$ \' $= \frac{V_1(u^{(k)} (t,x))}{\tau + V_1(u^{(k)} (t,x))} \left( m_1^{(k)} (t,x) + \tau \nabla \big(2\Phi^{(k+1)} (t,x) - \Phi^{(k)} (t,x)\big) \right);$\\
    [2ex]   
       \' $m_2^{(k+1)}(t,x)$ \' $= \frac{V_2(u^{(k)} (t,x))}{\tau + V_2(u^{(k)} (t,x))} \left( m_2^{(k)} (t,x) + \tau \big(2\Phi^{(k+1)} (t,x) -\Phi^{(k)} (t,x)\big) \right);$\\
    [2ex]
   \' $u^{(k+1)}(t,x)$ \' $ = root_+(2b+k_2, b^2 + 2bk_2, k_1+k_2 b^2)$,\quad $k_1$, $k_2$ are defined in Proposition~\ref{prop:optimality_u_linear}\\
  [2ex]
  \-\-\-\- \' $u^{(k+1)}(1,x)$ \' $ = (G^*)'(-2\Phi^{(k+1)}(1,x)+\Phi^{(k)}(1,x))$.\\
  [1ex]
   \rule{\linewidth}{0.5pt}
\end{tabbing}
Note that by introducing an extra dual variable $\Psi$, the optimality condition for $u^{(k+1)}$ becomes a third-order polynomial which has an analytical solution. Thus, the linear case algorithm does not require a second-order optimization method to compute $u^{(k+1)}$ as in Algorithm~2. The computational complexity of Algorithm~3 is similar to Algorithm~2. 
We use FFTW library to compute $(A A^T)^{-1}$ by Fast Fourier Transform (FFT), which takes $O(n\log n)$ operations per iteration, where $n$ is the number of points in time and spatial grids. The other variables, $\Psi$, $m_1$, $m_2$, and $u$, takes $O(n)$ operations to compute over $(t,x)\in[0,1]\times\Omega$. Overall, the algorithm takes $O(n\log n)$ operations per iteration.

\subsection{Discretization in 2D}\label{subsection:discretization}
Let $\Omega = [0,1]^2$ be a unit square in $\mathbb{R}^2$ and the terminal time $T=1$. Since we simulate the formulation on a compact set, we set the following boundary conditions on $m_1$ for the variational problem~\eqref{variation}.
\begin{equation}\label{eq:discrete_boundary_conditions}
    \begin{aligned}
        m_1(t,x) \cdot \vec{n}(t,x) = 0, \quad& (t,x)\in [0,1]\times \partial \Omega
    \end{aligned}
\end{equation}
where $\vec{n}$ is an outward normal vector and $\partial \Omega$ is a boundary of $\Omega$. This boundary condition is equivalent to no-flux condition that means no mass flows through a boundary. For the numerical experiments,
the domain $[0,1]\times\Omega$ is discretized with the regular Cartesian grid. Denote
\[\Delta x_1 = \frac{1}{N_{x_1}},\quad \Delta x_2 = \frac{1}{N_{x_2}},\quad \Delta t = \frac{1}{N_t-1}\]
where $N_{x_1}$, $N_{x_2}$ are the number of discretized points in $x_1$-axis and $x_2$-axis, and $N_t$ is the number of discretized points in time. Denote grid points in space and time as
\begin{equation*}
    \begin{aligned}
        x_{jl} &= \left( (j+0.5) \Delta x_1, (l+0.5)\Delta x_2 \right), && j = 0,\cdots,N_{x_1}-1\\
         &  &&  l = 0,\cdots,N_{x_2}-1\\
        t_n &= n \Delta t , && n = 0,\cdots,N_t-1.
    \end{aligned}
\end{equation*}
Using notations, we have the following approximations:
\[
\begin{aligned}
    u(t_n, x_{jl}) \;\approx&\; u_{\overlinetmp{n,jl}},\\ 
    \Phi(t_n, x_{jl}) \;\approx&\; \Phi_{\overlinetmp{n,jl}},\\
     m_1(t_n, x_{jl}) \;\approx&\; [m_1]_{\overlinetmp{n,jl}} = ([m_{1,x_1}]_{\overlinetmp{n,jl}},[m_{1,x_2}]_{\overlinetmp{n,jl}}),\\
    m_2(t_n, x_{jl}) \;\approx&\; {[m_2]}_{\overlinetmp{n,jl}} .
\end{aligned}
\]
The subscript $n$ represents the approximation at $t_n\in[0,1]$ and the subscript $jl$ represents the approximation at $x_{jl} \in \Omega$. Here, $m_{1,x_1}$ and $m_{1,x_2}$ are $x_1$ and $x_2$ components of $m_1$, respectively.

 The gradient operators are calculated using the first order approximation.
\begin{align*}
    \partial_{x_1} \Phi_{\overlinetmp{n,jl}} &= \frac{\Phi_{\overlinetmp{n,jl}}-\Phi_{\overlinetmp{n,j-1,l}}}{\Delta x_1},
    & \partial_{x_2} \Phi_{\overlinetmp{n,jl}} &= \frac{\Phi_{\overlinetmp{n,jl}}-\Phi_{\overlinetmp{n,j,l-1}}}{\Delta x_2}\\
    \partial_t \Phi_{\overlinetmp{n,jl}} &= \frac{\Phi_{\overlinetmp{n+1,jl}} - \Phi_{\overlinetmp{n,jl}}}{\Delta t},
    & \partial_t u_{\overlinetmp{n,jl}} &= \frac{u_{\overlinetmp{n,jl}} - u_{\overlinetmp{n-1,jl}}}{\Delta t}
\end{align*}
where $\partial_{x_i}$ is a partial derivative with respect to $x_i$ ($i=1,2$) axis and $\partial_t$ is a partial derivative with respect to time $t$. The divergence operator for $m_1$ is calculated using the first order approximation as well.
\begin{align*}
    \nabla \cdot [m_1]_{\overlinetmp{n,jl}} = \partial_{x_1} [m_{1,x_1}]_{\overlinetmp{n,jl}} + \partial_{x_2} [m_{1,x_2}]_{\overlinetmp{n,jl}}
\end{align*}
\begin{align*}
    \partial_{x_1} [m_{1,x_1}]_{\overlinetmp{n,jl}} &= \frac{[m_{1,x_1}]_{\overlinetmp{n,j+1,l}}-[m_{1,x_1}]_{\overlinetmp{n,j,l}}}{\Delta x_1},
    & \partial_{x_2} [m_{1,x_2}]_{\overlinetmp{n,jl}} &= \frac{[m_{1,x_2}]_{\overlinetmp{n,j,l+1}}-[m_{1,x_2}]_{\overlinetmp{n,jl}}}{\Delta x_2}
\end{align*}
\noindent Note that the partial derivative of $\Phi$ with respect to $t$ uses forward difference scheme and that of $u$ uses backward difference scheme. The partial derivatives of $\Phi$ with respect to $x_1$ and $x_2$ use backward difference scheme and those of $m_1$ use forward difference scheme. The following proposition justifies our choices of finite difference schemes for each variable.

\begin{proposition}\label{prop:discrete_optimality}
    Define the discrete partial differential operators of $u$ and $m_1$ as follows
    \begin{equation}\label{eq:finite_difference_schemes_u_m_1}
    \begin{aligned}
         \partial_t u_{\overlinetmp{n,jl}} &= \frac{u_{\overlinetmp{n,jl}} - u_{\overlinetmp{n-1,jl}}}{\Delta t} \\
         \nabla \cdot [m_1]_{\overlinetmp{n,jl}} &= \partial_{x_1} [m_{1,x_1}]_{\overlinetmp{n,jl}} + \partial_{x_2} [m_{1,x_2}]_{\overlinetmp{n,jl}}\\
             \partial_{x_1} [m_{1,x_1}]_{\overlinetmp{n,jl}} &= \frac{[m_{1,x_1}]_{\overlinetmp{n,j+1l}}-[m_{1,x_1}]_{\overlinetmp{n,jl}}}{\Delta x_1}\\
        \partial_{x_2} [m_{1,x_2}]_{\overlinetmp{n,jl}} &= \frac{[m_{1,x_2}]_{\overlinetmp{n,j,l+1}}-[m_{1,x_2}]_{\overlinetmp{n,jl}}}{\Delta x_2}.
    \end{aligned}
    \end{equation}
    Suppose $u$ satisfies the Dirichlet boundary condition in time
    \[
    \begin{aligned}
        u_{0,jl} = u_0(x_{jl}),\quad &j=0,\cdots,N_{x_1}-1\\
        &l=0,\cdots,N_{x_2}-1
    \end{aligned}
    \]
    and $m_1$ satisfies the no-flux boundary condition in space
    \begin{equation*}
    \begin{aligned}
        \,[m_{1,x_1}]_{\overlinetmp{n,0,l}} = [m_{1,x_1}]_{\overlinetmp{n,N_{x_1},l}} = 0,\quad &n=0,\cdots,N_t-1,\\ &l=0,\cdots,N_{x_2}-1
    \end{aligned}
    \end{equation*}
    and 
    \[
    \begin{aligned}
        \,[m_{1,x_2}]_{\overlinetmp{n,j,0}} = [m_{1,x_2}]_{\overlinetmp{n,j, N_{x_2}}} = 0,\quad &n=0,\cdots,N_t-1,\\ &j=0,\cdots,N_{x_1}-1.
    \end{aligned}
    \]
    Then, $\Phi_{\overlinetmp{n,jl}}$, $[m_1]_{\overlinetmp{n,jl}}$, $[m_2]_{\overlinetmp{n,jl}}$ satisfy the following optimality conditions:
    \begin{align}\label{eq:optimality_phi_discrete}
        \partial_t u_{\overlinetmp{n,jl}} + \nabla \cdot [m_1]_{\overlinetmp{n,jl}} - [m_2]_{\overlinetmp{n,jl}} = 0
    \end{align}
    \begin{align}\label{eq:optimality_m1_discrete}
        \frac{[m_{1,x_i}]_{\overlinetmp{n,jl}}}{V_1(u_{\overlinetmp{n,jl}})} - \partial_{x_i} \Phi_{\overlinetmp{n,jl}} = 0, \quad (i=1,2)
    \end{align}
    \begin{align}\label{eq:optimality_m2_discrete}
        \frac{[m_{2}]_{\overlinetmp{n,jl}}}{V_2(u_{\overlinetmp{n,jl}})} - \Phi_{\overlinetmp{n,jl}} = 0,
    \end{align}
    for $n=0,\cdots,N_t-1$, $j=0,\cdots,N_{x_1}-1$, $l=0,\cdots,N_{x_2}-1$. The optimality condition for $u_{\overlinetmp{n,jl}}$ is
    \begin{equation}\label{eq:optimality_u_discrete}
    \begin{aligned}
        - \frac{([m_{1,x_1}]_{\overlinetmp{n,jl}})^2+([m_{1,x_2}]_{\overlinetmp{n,jl}})^2}{2 V_1(u_{\overlinetmp{n,jl}})^2} V_1'(u_{\overlinetmp{n,jl}})
            - \frac{([m_{2}]_{\overlinetmp{n,jl}})^2}{2 V_2(u_{\overlinetmp{n,jl}})^2} V_2'(u_{\overlinetmp{n,jl}})
            - \partial_{t} \Phi_{\overlinetmp{n,jl}} + s'(u_{\overlinetmp{n,jl}})= 0
    \end{aligned}
    \end{equation}
    for $n=1,\cdots,N_t-2$, $j=0,\cdots,N_{x_1}-1$, $l=0,\cdots,N_{x_2}-1$ and
    \begin{equation}\label{eq:terminal_phi_condition_discrete}
        \Phi_{\overlinetmp{N_t-1,jl}} + G'(u_{\overlinetmp{N_t-1,jl}}) = 0
    \end{equation}
    for $j=0,\cdots,N_{x_1}-1$, $l=0,\cdots,N_{x_2}-1$.
    The differential operators of $\Phi$ with respect to time and space are defined as
    \begin{align*}
        \partial_t \Phi_{\overlinetmp{n,jl}} &= \frac{\Phi_{\overlinetmp{n+1,jl}} - \Phi_{\overlinetmp{n,jl}}}{\Delta t},\\
        \partial_{x_1} \Phi_{\overlinetmp{n,jl}} &= \frac{\Phi_{\overlinetmp{n,jl}}-\Phi_{\overlinetmp{n,j-1,l}}}{\Delta x_1},\\
        \partial_{x_2} \Phi_{\overlinetmp{n,jl}} &= \frac{\Phi_{\overlinetmp{n,jl}}-\Phi_{\overlinetmp{n,j,l-1}}}{\Delta x_2},
    \end{align*}
with Neumann boundary conditions in time and space
    \[
    \begin{aligned}
        \Phi_{\overlinetmp{1,jl}} = \Phi_{\overlinetmp{0,jl}},\quad &j=0,\cdots,N_{x_1}-1,\\ &l=0,\cdots,N_{x_2}-1.
    \end{aligned}
    \]
    \[
    \begin{aligned}
        \Phi_{\overlinetmp{n,j,0}} = \Phi_{\overlinetmp{n,j,-1}},\quad &n=0,\cdots,N_t-1,\\ &j=0,\cdots,N_{x_1}-1.
    \end{aligned}
    \]
    \[
    \begin{aligned}
        \Phi_{\overlinetmp{n,0,l}} = \Phi_{\overlinetmp{n,-1,l}},\quad &n=0,\cdots,N_t-1,\\ &l=0,\cdots,N_{x_2}-1
    \end{aligned}
    \]
    and a boundary condition in time at $t=1$ from~\eqref{eq:terminal_phi_condition_discrete}.
    
\end{proposition}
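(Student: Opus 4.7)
The plan is to write down the discrete Lagrangian corresponding to the saddle point formulation \eqref{eq:experiment_lagrangian} using precisely the finite difference operators \eqref{eq:finite_difference_schemes_u_m_1}, and then to read off each optimality condition by differentiating with respect to the appropriate discrete unknown. Concretely, I would consider
\[
\mathcal{L}^h = \Delta t\,\Delta x_1\,\Delta x_2 \sum_{n,j,l} \Bigl[ \tfrac{|[m_1]_{\overlinetmp{n,jl}}|^2}{2V_1(u_{\overlinetmp{n,jl}})} + \tfrac{[m_2]_{\overlinetmp{n,jl}}^2}{2V_2(u_{\overlinetmp{n,jl}})} + \Phi_{\overlinetmp{n,jl}}\bigl(\partial_t u_{\overlinetmp{n,jl}} + \nabla\cdot[m_1]_{\overlinetmp{n,jl}} - [m_2]_{\overlinetmp{n,jl}}\bigr) - s(u_{\overlinetmp{n,jl}}) \Bigr] + \Delta x_1\Delta x_2\sum_{j,l} G(u_{\overlinetmp{N_t-1,jl}}).
\]
Differentiating $\mathcal{L}^h$ with respect to $\Phi_{\overlinetmp{n,jl}}$ at each interior node gives \eqref{eq:optimality_phi_discrete} directly. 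Differentiating with respect to $[m_2]_{\overlinetmp{n,jl}}$ immediately gives \eqref{eq:optimality_m2_discrete}.

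The equations \eqref{eq:optimality_m1_discrete} and the emergence of the backward-in-space operator on $\Phi$ are where discrete summation by parts enters. For the $x_1$-component,
\[
\sum_{n,j,l}\Phi_{\overlinetmp{n,jl}}\,\partial_{x_1}[m_{1,x_1}]_{\overlinetmp{n,jl}} = \sum_{n,j,l}\Phi_{\overlinetmp{n,jl}}\,\tfrac{[m_{1,x_1}]_{\overlinetmp{n,j+1,l}}-[m_{1,x_1}]_{\overlinetmp{n,jl}}}{\Delta x_1}.
\]
Reindexing $j \mapsto j-1$ in the first term reveals the adjoint $\tfrac{\Phi_{\overlinetmp{n,jl}}-\Phi_{\overlinetmp{n,j-1,l}}}{\Delta x_1}$, which is precisely the backward difference $\partial_{x_1}\Phi_{\overlinetmp{n,jl}}$ prescribed in the proposition; the boundary terms at $j=0$ and $j=N_{x_1}$ vanish by the no-flux hypothesis on $m_1$ (and are consistent with the Neumann extension $\Phi_{\overlinetmp{n,0,l}}=\Phi_{\overlinetmp{n,-1,l}}$). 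Differentiating the rearranged Lagrangian with respect to $[m_{1,x_1}]_{\overlinetmp{n,jl}}$ then yields \eqref{eq:optimality_m1_discrete}; the $x_2$-component is identical.

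Exactly the same mechanism handles the temporal coupling. Shifting $n\mapsto n+1$ in
\[
\sum_{n,j,l}\Phi_{\overlinetmp{n,jl}}\,\tfrac{u_{\overlinetmp{n,jl}}-u_{\overlinetmp{n-1,jl}}}{\Delta t}
\]
identifies the adjoint as the forward-in-time difference $\partial_t\Phi_{\overlinetmp{n,jl}}=(\Phi_{\overlinetmp{n+1,jl}}-\Phi_{\overlinetmp{n,jl}})/\Delta t$ from the proposition, again assuming the Neumann extension $\Phi_{\overlinetmp{1,jl}}=\Phi_{\overlinetmp{0,jl}}$ so that the $n=0$ boundary contribution drops out (the initial condition $u_{\overlinetmp{0,jl}}=u_0(x_{jl})$ is fixed, so there is no variation at $n=0$). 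Setting the derivative of $\mathcal{L}^h$ with respect to $u_{\overlinetmp{n,jl}}$ to zero for interior $n=1,\ldots,N_t-2$ produces \eqref{eq:optimality_u_discrete}.

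The final ingredient is the terminal condition \eqref{eq:terminal_phi_condition_discrete}. At $n=N_t-1$ the shifted sum leaves an unmatched boundary term $-\Phi_{\overlinetmp{N_t-1,jl}}\,u_{\overlinetmp{N_t-1,jl}}/\Delta t$, which combines with the derivative of the terminal cost $G(u_{\overlinetmp{N_t-1,jl}})$; setting $\partial\mathcal{L}^h/\partial u_{\overlinetmp{N_t-1,jl}}=0$ and using the convention $\Phi_{\overlinetmp{N_t,jl}}$ does not appear (equivalently, reading the forward stencil as $\partial_t\Phi_{\overlinetmp{N_t-1,jl}}=0$ past the terminal time), one obtains $\Phi_{\overlinetmp{N_t-1,jl}}+G'(u_{\overlinetmp{N_t-1,jl}})=0$. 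The main bookkeeping obstacle is to verify that the boundary contributions from both summations by parts cancel cleanly under the stated Dirichlet, no-flux, and Neumann conditions so that no spurious stencil terms leak into the optimality equations; once this is confirmed, the proof is a mechanical computation of partial derivatives of $\mathcal{L}^h$.
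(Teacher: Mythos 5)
Your proposal follows essentially the same route as the paper: discretize the Lagrangian with the stated forward/backward stencils, perform discrete summation by parts in space and time (using the no-flux and Neumann extensions to kill the boundary contributions), identify the adjoint operators acting on $\Phi$, and read off each optimality condition by differentiation, with the terminal condition \eqref{eq:terminal_phi_condition_discrete} arising from the leftover boundary term at $n=N_t-1$ combined with $G'$. The only blemish is the sign of that leftover term, which is $+\Phi_{\overlinetmp{N_t-1,jl}}u_{\overlinetmp{N_t-1,jl}}/\Delta t$ rather than negative (otherwise you would obtain $-\Phi_{\overlinetmp{N_t-1,jl}}+G'(u_{\overlinetmp{N_t-1,jl}})=0$); this is a bookkeeping slip, not a gap in the method.
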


\begin{proof}
    Using the discretization notations that are introduced in this subsection, we discretize the Lagrangian functional in~\eqref{eq:experiment_lagrangian}.
    \begin{equation*}
        \begin{aligned}
            &\mathcal{L}(m_1,m_2,u,\Phi)\\
            &\approx 
            \Delta t \Delta x_1 \Delta x_2 \sum^{N_{t}-1}_{n=1} \sum^{N_{x_1}-1}_{j=0} \sum^{N_{x_2}-1}_{l=0} \Bigg( \frac{([m_{1,x_1}]_{\overlinetmp{n,jl}})^2 + ([m_{1,x_1}]_{\overlinetmp{n,jl}})^2}{2 V_1(u_{\overlinetmp{n,jl}})}
            +  \frac{([m_{2}]_{\overlinetmp{n,jl}})^2}{2 V_2(u_{\overlinetmp{n,jl}})}\\
            &\hspace{5cm} + \Phi_{\overlinetmp{n,jl}} \Big( \partial_t u_{\overlinetmp{n,jl}} + \nabla \cdot [m_1]_{\overlinetmp{n,jl}} - [m_2]_{\overlinetmp{n,jl}} \Big) + s(u_{\overlinetmp{n,jl}})  \Bigg)\\
            &\quad+ \Delta x_1 \Delta x_2 \sum^{N_{x_1}-1}_{j=1} \sum^{N_{x_2}-1}_{l=1} G(u_{\overlinetmp{N_t-1,jl}}).
        \end{aligned}
    \end{equation*}
    The optimality condition for $\Phi_{\overlinetmp{n,jl}}$ can be obtained by differentiating with respect to $\Phi_{\overlinetmp{n,jl}}$
    \[
        \partial_t u_{\overlinetmp{n,jl}} + \nabla \cdot [m_1]_{\overlinetmp{n,jl}} - [m_2]_{\overlinetmp{n,jl}} = 0
    \]
    for $n=0,\cdots,N_t-1$, $j=0,\cdots,N_{x_1}-1$, $l=0,\cdots,N_{x_2}-1$.
    
    Let us take a closer look at the terms with gradients and divergence operators. Using the difference schemes defined in~\eqref{eq:finite_difference_schemes_u_m_1}, it can be rewritten as
    \begin{align*}
        & \sum^{N_{t}-1}_{n=1} \sum^{N_{x_1}-1}_{j=0} \sum^{N_{x_2}-1}_{l=0}\Phi_{\overlinetmp{n,jl}} \Big( \partial_t u_{\overlinetmp{n,jl}} + \nabla \cdot [m_1]_{\overlinetmp{n,jl}} \Big)\\
        &=
        \sum^{N_{t}-1}_{n=1} \sum^{N_{x_1}-1}_{j=0} \sum^{N_{x_2}-1}_{l=0} \Phi_{\overlinetmp{n,jl}} \Big( \frac{u_{\overlinetmp{n,jl}} - u_{\overlinetmp{n-1,jl}}}{\Delta t} + \frac{[m_{1,x_1}]_{n,j+1l}-[m_{1,x_1}]_{\overlinetmp{n,jl}}}{\Delta x_1} + \frac{[m_{1,x_2}]_{\overlinetmp{n,j,l+1}}-[m_{1,x_2}]_{\overlinetmp{n,jl}}}{\Delta x_2} \Big).
    \end{align*}
    By rearranging the indices,
    \begin{align*}
        =&
        - \sum^{N_{t}-2}_{n=1} \sum^{N_{x_1}-1}_{j=0} \sum^{N_{x_2}-1}_{l=0} \Big(\frac{\Phi_{\overlinetmp{n+1,jl}} - \Phi_{\overlinetmp{n,jl}}}{\Delta t}\Big)  u_{\overlinetmp{n,jl}}\\ 
        & - \sum^{N_{t}-1}_{n=1} \sum^{N_{x_1}-1}_{j=0} \sum^{N_{x_2}-1}_{l=0}  \Big( \frac{\Phi_{\overlinetmp{n,jl}}-\Phi_{\overlinetmp{n,j-1,l}}}{\Delta x_1}\Big) [m_{1,x_1}]_{\overlinetmp{n,jl}}\\
        & - \sum^{N_{t}-1}_{n=1} \sum^{N_{x_1}-1}_{j=0} \sum^{N_{x_2}-1}_{l=0}   \Big( \frac{\Phi_{\overlinetmp{n,jl}}-\Phi_{\overlinetmp{n,j,l-1}}}{\Delta x_2} \Big) [m_{1,x_2}]_{\overlinetmp{n,jl}}\\
        & + \sum^{N_{x_1}-1}_{j=1}\sum^{N_{x_2}-1}_{l=1}\frac{1}{\Delta t} (\Phi_{\overlinetmp{N_t-1,jl}} u_{\overlinetmp{N_t-1,jl}} - \Phi_{\overlinetmp{0,jl}} u_{\overlinetmp{0,jl}}).
    \end{align*}
    From above, we define partial differential operators of $\Phi$ as
    \[
        \begin{aligned}
            \partial_t \Phi_{\overlinetmp{n,jl}} &= (\Phi_{\overlinetmp{n+1,jl}} - \Phi_{\overlinetmp{n,jl}})/\Delta t,\\ 
            \partial_{x_1} \Phi_{\overlinetmp{n,jl}} &= (\Phi_{\overlinetmp{n,jl}} - \Phi_{\overlinetmp{n,j-1,l}})/\Delta x_1,\\ \partial_{x_2} \Phi_{\overlinetmp{n,jl}} &= (\Phi_{\overlinetmp{n,jl}} - \Phi_{\overlinetmp{n,j,l-1}})/\Delta x_2.
        \end{aligned}
    \]
    Putting back to the Lagrangian functional, we get
    \begin{equation*}
        \begin{aligned}
            &\mathcal{L}(m_1,m_2,u,\Phi)\\
            &\approx 
            \Delta t \Delta x_1 \Delta x_2 \sum^{N_{t}-1}_{n=1} \sum^{N_{x_1}-1}_{j=0} \sum^{N_{x_2}-1}_{l=0} \Bigg( \frac{([m_{1,x_1}]_{\overlinetmp{n,jl}})^2 + ([m_{1,x_1}]_{\overlinetmp{n,jl}})^2}{2 V_1(u_{\overlinetmp{n,jl}})}
            +  \frac{([m_{2}]_{\overlinetmp{n,jl}})^2}{2 V_2(u_{\overlinetmp{n,jl}})}
            - \Phi_{\overlinetmp{n,jl}} \, [m_2]_{\overlinetmp{n,jl}}\\
            & \hspace{6cm} - \partial_{x_1} \Phi_{\overlinetmp{n,jl}} [m_{1,x_1}]_{\overlinetmp{n,jl}} -  \partial_{x_2} \Phi_{\overlinetmp{n,jl}} [m_{1,x_2}]_{\overlinetmp{n,jl}} + s(u_{\overlinetmp{n,jl}})  \Bigg)\\
            &- \Delta t \Delta x_1 \Delta x_2 \sum^{N_{t}-2}_{n=1} \sum^{N_{x_1}-1}_{j=0} \sum^{N_{x_2}-1}_{l=0} \partial_t \Phi_{\overlinetmp{n,jl}}  u_{\overlinetmp{n,jl}}\\ 
        & + \Delta x_1 \Delta x_2 \sum^{N_{x_1}-1}_{j=1}\sum^{N_{x_2}-1}_{l=1} \Phi_{\overlinetmp{N_t-1,jl}} u_{\overlinetmp{N_t-1,jl}} - \Phi_{\overlinetmp{0,jl}} u_{\overlinetmp{0,jl}} +  G(u_{\overlinetmp{N_t-1,jl}}).
        \end{aligned}
    \end{equation*}
    By differentiating with respect to $[m_{1,x_i}]_{\overlinetmp{n,jl}}$ $(i=1,2)$, $[m_2]_{\overlinetmp{n,jl}}$, $u_{\overlinetmp{n,jl}}$, we achieve the optimality conditions~\eqref{eq:optimality_m1_discrete},~\eqref{eq:optimality_m2_discrete},~\eqref{eq:optimality_u_discrete}.

\end{proof}

From Proposition~\ref{prop:discrete_optimality}, the variational problem solves the following PDE:
\begin{equation}\label{eq:discrete_PDE}
    \begin{aligned}
        &\partial_t u + \nabla \cdot m_1 = m_2\\
        &\partial_t \Phi + \frac{1}{2}  V_1'(u) |\nabla \Phi|^2 + \frac{1}{2} V_2'(u) \Phi^2  = s'(u)\\
        &u(0,x) = u_0(x), \quad \Phi(1,x) = - G'(u(1,x)),\quad x\in\Omega.
    \end{aligned}    
\end{equation}

\noindent Note that solving this system of PDEs is computationally challenging. The PDE of $u$ evolves forward in time with an initial condition $u_0$ and that of $\Phi$ evolves backward in time with a terminal condition $-G'(u(1,x))$. Solving the PDEs using the finite difference methods is difficult due to the strict CFL condition (the ratio of $\Delta t$, $\Delta x_1$, $\Delta x_2$). Furthermore, using finite difference schemes that are implicit in time is also challenging because there is no simple formula of $u_{\overlinetmp{n,jl}}$ in terms of $u_{\overlinetmp{n-1,jl}}$, $[m_1]_{\overlinetmp{n,jl}}$, $[m_2]_{\overlinetmp{n,jl}}$ that satisfy the PDE of $\Phi$~\eqref{eq:discrete_PDE}. With nonlinear $V_1$ and $V_2$, it becomes even more difficult to use any finite difference schemes. 

Instead of solving PDEs directly using finite difference schemes, the algorithm computes the saddle point of a variational problem~\eqref{variation} that satisfies the optimality conditions in Proposition~\ref{prop:discrete_optimality}. Thus, the saddle points are in fact the solutions to the PDE~\eqref{eq:discrete_PDE}.
Furthermore, by~\eqref{eq:optimality_phi_discrete} and~\eqref{eq:optimality_u_discrete}, the algorithm solves~\eqref{eq:discrete_PDE} with implicit finite difference schemes (forward in time for $u$ and backward in time for $\Phi$). Thus, the algorithm circumvents the numerical difficulties coming from the strict CFL conditions.

\section{Numerical Examples}
In this section, we present two sets of numerical experiments using the Algorithm~2 with different $V_1$, $V_2$ functions. We wrote C++ codes to run the numerical experiments. For all the experiments we used the discretization sizes $N_{x_1} = N_{x_2} = 128$, $N_{t} = 30$ and $c=0.1$. Furthermore, in the numerical experiments, we use 
\begin{equation*}
 \mathcal{F}(u(t,\cdot))(x) =  -c \int_\Omega \Big(u(t,x) \log u(t,x) - u(t,x)\Big)\, dx,  
\end{equation*}
 with a given positive constant $c$. 

\subsubsection{Numerical Example 1}
The numerical simulations are based on the choice of 
$$V_1(u) = u \text{ and } V_2(u) = u^\alpha$$ 
where $\alpha>0$.  The choices are from Example~5 in Section~\ref{section4}. 
We show how varying the exponent $\alpha$ affects the evolution of the densities. Below are the initial and terminal densities used in the simulations.

\begin{equation*}\label{eq:exp1-initial-densities}
    \begin{aligned}
        u_0(0,x_1,x_2) &= 10 \exp\bigl(-60\left((x_1-0.3)^2+(x_2-0.7)^2\right)\bigr) + 1\\
        u_1(0,x_1,x_2) &= 20 \exp\bigl(-60\left((x_1-0.7)^2+(x_2-0.3)^2\right)\bigr) + 1.
    \end{aligned}
\end{equation*}
Furthermore, we use the terminal functional
\begin{equation}\label{eq:experiment1_terminal_functional}
    \mathcal{G}(u(1,\cdot)) = \int_\Omega i_{u_1(x)}(u(1,x))\, dx
\end{equation}
where 
\[
    i_{u_1(x)}(u(1,x)) = \begin{cases}
        0 & \text{if } u(1,x) = u_0(x)\\
        \infty & \text{otherwise.}
    \end{cases}
\]
This function is equivalent to the following discrete form:
\[
    i_{[u_1]_{\overlinetmp{jl}}}(u_{\overlinetmp{0,jl}}) = \begin{cases}
        0 & \text{if } u_{\overlinetmp{N_t-1,jl}} = [u_1]_{\overlinetmp{jl}}\\
        \infty & \text{otherwise}
    \end{cases}
\]
for $j=0,\cdots,N_{x_1}-1$ and  $l=0,\cdots,N_{x_2}-1$.
Recall that, from Proposition~\ref{prop:discrete_optimality}, the boundary condition of $\Phi$ at $t=1$ is given by
\[
    \Phi_{\overlinetmp{N_t-1,jl}} = - (i_{[u_1]_{\overlinetmp{jl}}})'(u_{\overlinetmp{N_t-1,jl}}).
\]
However, dealing with the derivative of $i_{[u_1]_{\overlinetmp{jl}}}$ is computationally challenging because it is non-differentiable and discontinuous. Since the function is convex, we may use a property Legendre transform, i.e.,
\[
    (f^*)' = (f')^{-1}
\]
for any convex function $f$.
Using the property, we get the boundary condition of $u$ at time $t=1$,
\[
    u_{\overlinetmp{N_t-1,jl}} = (i^*_{[u_1]_{\overlinetmp{jl}}})'( - \Phi_{\overlinetmp{N_t-1,jl}})
\]
where
\[
\begin{aligned}
    i^*_{[u_1]_{\overlinetmp{jl}}}(\Phi) &= \sup_{u \in \mathbb{R}} \quad u\cdot\Phi   - i_{[u_1]_{\overlinetmp{jl}}}(u) = [u_1]_{\overlinetmp{jl}} \Phi.
\end{aligned}
\]
Thus, we have
\[
    u_{\overlinetmp{N_t-1,jl}} = [u_1]_{\overlinetmp{jl}}.
\]
In other words, the terminal functional~\eqref{eq:experiment1_terminal_functional} is equivalent to the boundary condition of $u$ in time at $t=1$.

The variational problem~\eqref{variation} is solved with these initial conditions using Algorithm~2. The solution of the variational problem represents the evolution of the density $u$ from initial density $u_0$ at time $t=0$ to terminal density $u_1$ at $t=1$ that satisfies the system of PDEs~\eqref{eq:discrete_PDE}. The results are shown in Figure~\ref{fig:exp1-imshows}, Figure~\ref{fig:exp1-lines} and Figure~\ref{fig:exp1-conv}. In Figure~\ref{fig:exp1-imshows}, the plots show the evolution of densities from time $t=0$ to $t=1$. The first row is from $V_2(u)=u$, the second row is from $V_2(u) = u^2$, the third row is from $V_2(u) = u^3$ and the last row is from $V_2(u) = u^4$. The Figure~\ref{fig:exp1-lines} shows the cross sections of the 2d plots from Figure~\ref{fig:exp1-imshows} along a line $x+y=1$.

\begin{figure}[h!]
    \centering
    \includegraphics[width=0.97\linewidth]{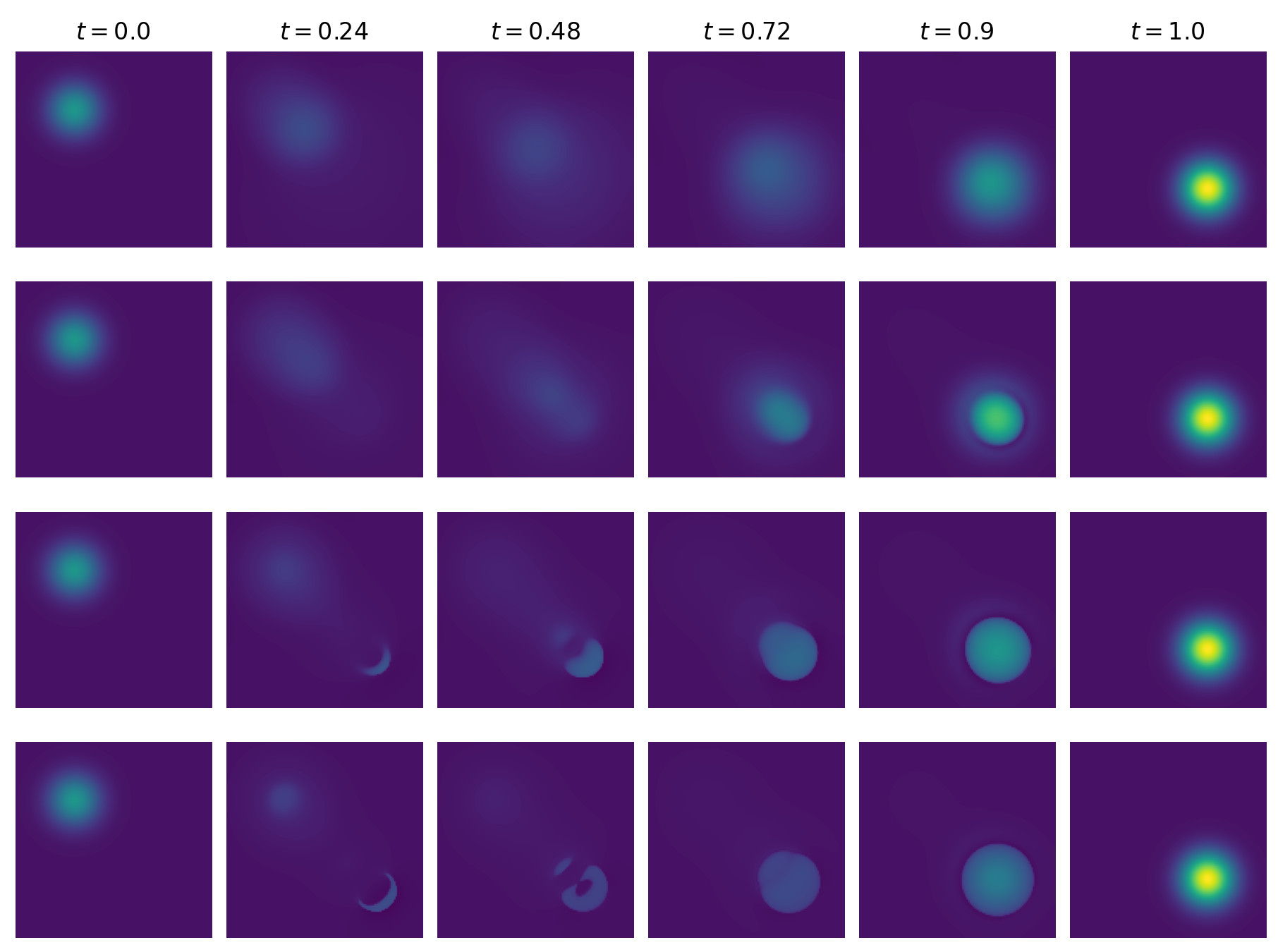}
    \caption{Example 1: Snapshots of the densities at different times $t$. Each column shows the results at $t=0$ (column 1), $t=0.24$ (column 2), $t=0.48$ (column 3), $t=0.72$ (column 4), $t=0.9$ (column 5), $t=1$ (column 6). The first, the second, the third and the fourth rows represent $V_2(u)=u$, $V_2(u)=u^2$, $V_2(u)=u^3$, $V_2(u)=u^4$, respectively. $V_1(u)=u$ for all simulations.}
    \label{fig:exp1-imshows}
\end{figure}

Note that the function $(u,m)\mapsto \frac{\|m\|_2^2}{2 V(u)}$ is nonconvex when $V(u)=u^\alpha$ and $\alpha>1$. Thus, the solutions for $V_2(u)=u^2, u^3, u^4$ of the variational problems are not unique and the numerical results may not represent the global minimizer of the formulation~\eqref{variation}. However, the algorithm converges to a local minimum and the convergence plot can be seen in Figure~\ref{fig:exp1-conv}. In Figure~\ref{fig:exp1-conv}, the vertical axis represents the value of the energy functional given in~\eqref{osher} and the horizontal axis represents iterations. 

\begin{figure}
    \centering
    \includegraphics[width=0.5\linewidth]{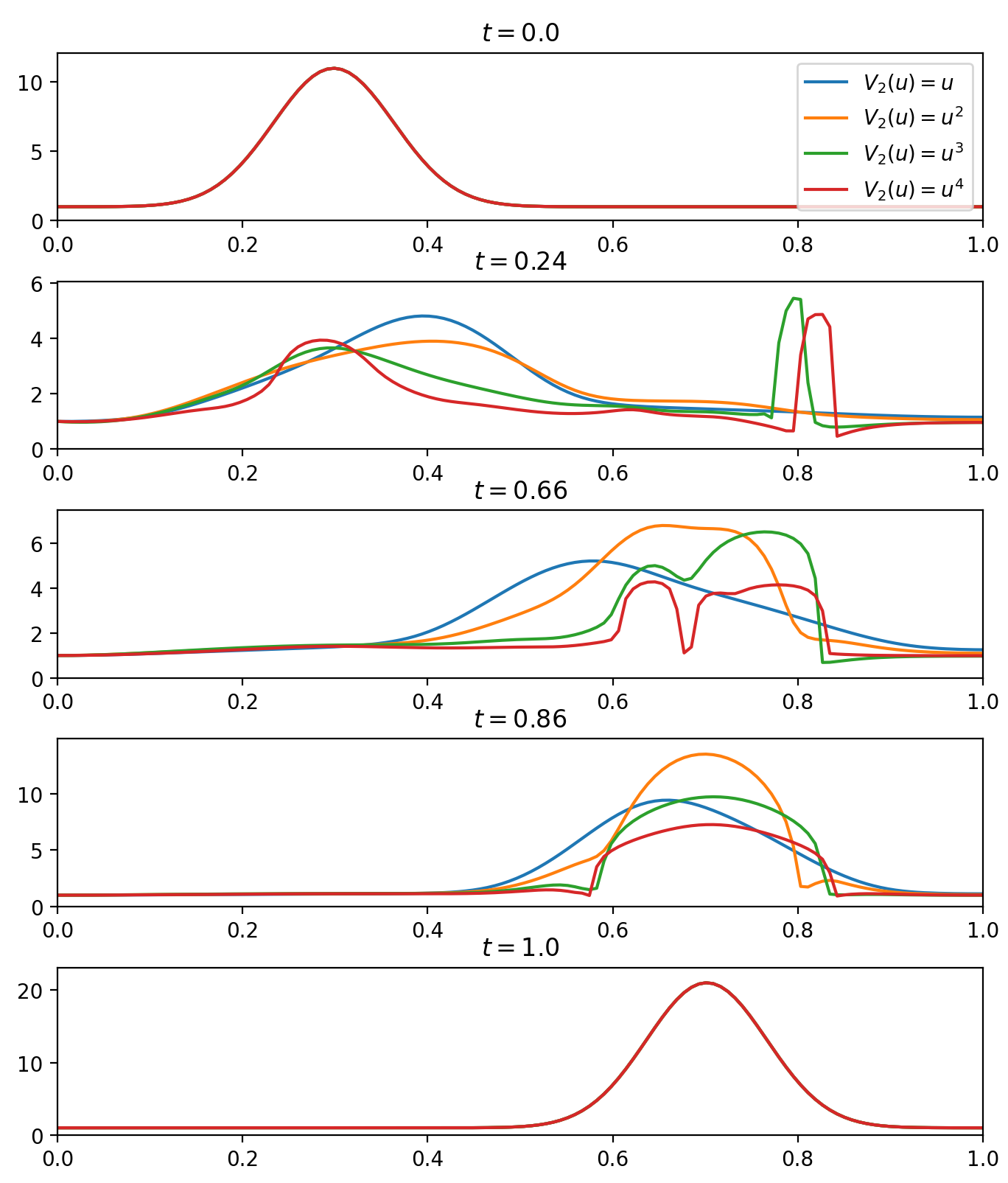}
    \caption{Example 1: Cross sections along a line $x+y=1$ for different time $t$. Each color represents $V_2(u)=u$, $V_2(u)=u^2$, $V_2(u)=u^4$.}
    \label{fig:exp1-lines}
\end{figure}
\begin{figure}
    \centering
    \includegraphics[width=0.6\linewidth]{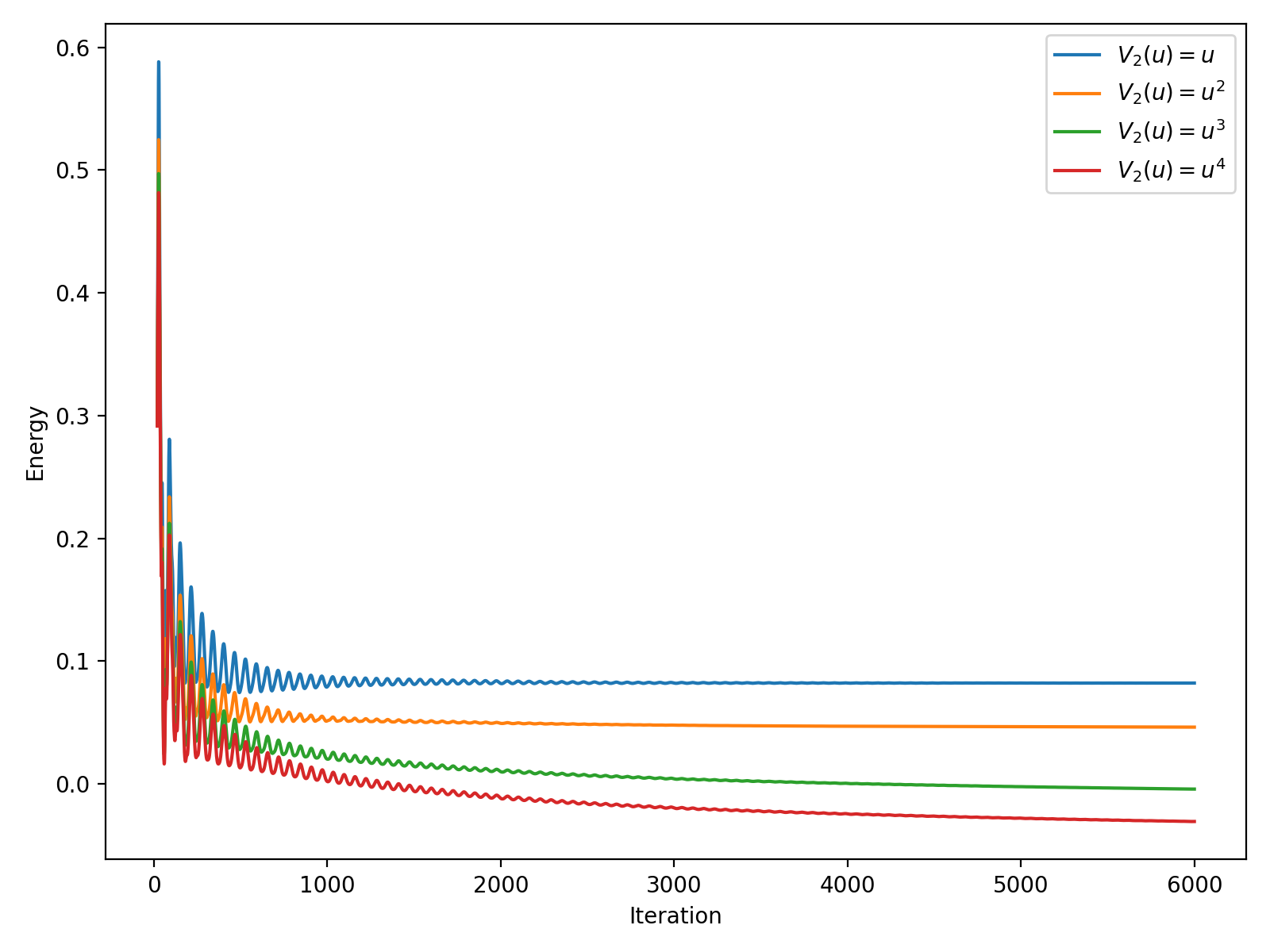}
    \caption{Example 1: The convergence plot from the Algorithm. The x-axis represents iterations and y-axis represents the energy defined in~\eqref{osher}.}
    \label{fig:exp1-conv}
\end{figure}
\newpage
\subsubsection{Numerical Example 2}
The numerical simulations are based on the choice of $V_2(u) = \frac{u(u-1)}{\log u}$ which is from the Fisher-KPP equation in Example \ref{FKPP}. 
In particular, this result compares the following three sets of functions.
\begin{table}[h]
    \centering
    \begin{tabular}{c|c|c}
        & $V_1(u)$ & $V_2(u)$ \\
        \hline
        1 & $u$ & $u(u-1)/\log u$ \\
        2 & $\sqrt{u}$ & $u(u-1)/\log u$ \\
        3 & $\sqrt{u}$ & $u$ \\
    \end{tabular}
\end{table} 

 Below are the initial and terminal densities used in the simulations.

\begin{equation*}
    \begin{aligned}
        u_0(0,x) &= 15 \exp\bigl(-80\left((x-0.5)^2+(y-0.5)^2\right)\bigr) + 1\\
        u_1(0,x) &= 15 \exp\bigl(-80\left((x-0.3)^2+(y-0.3)^2\right)\bigr) + 15 \exp\bigl(-80\left((x-0.7)^2+(y-0.7)^2\right)\bigr) + 1.
    \end{aligned}
\end{equation*}
As in example~1, the terminal functional is defined as
\[
    \mathcal{G}(u(1,\cdot)) = \int_\Omega i_{u_1(x)}(u(1,x))\, dx.
\]
The variational problem~\eqref{variation} with these initial conditions are solved using Algorithm~2. The results are shown in Figure~\ref{fig:exp2-imshows} and Figure~\ref{fig:exp2-conv}. {This example is also computed from our Algorithm 2. Figure \ref{fig:exp2-imshows} demonstrates the evolution of densities for different $V_1$ and $V_2$ for the time interval $[0,1]$. Figure \ref{fig:exp2-conv} shows the optimal control energy functional value for different choices of $V_1$, $V_2$. The evolution of density functions are similar, but the energy functionals are different.}

\begin{figure}[H]
    \centering
    \includegraphics[width=0.97\linewidth]{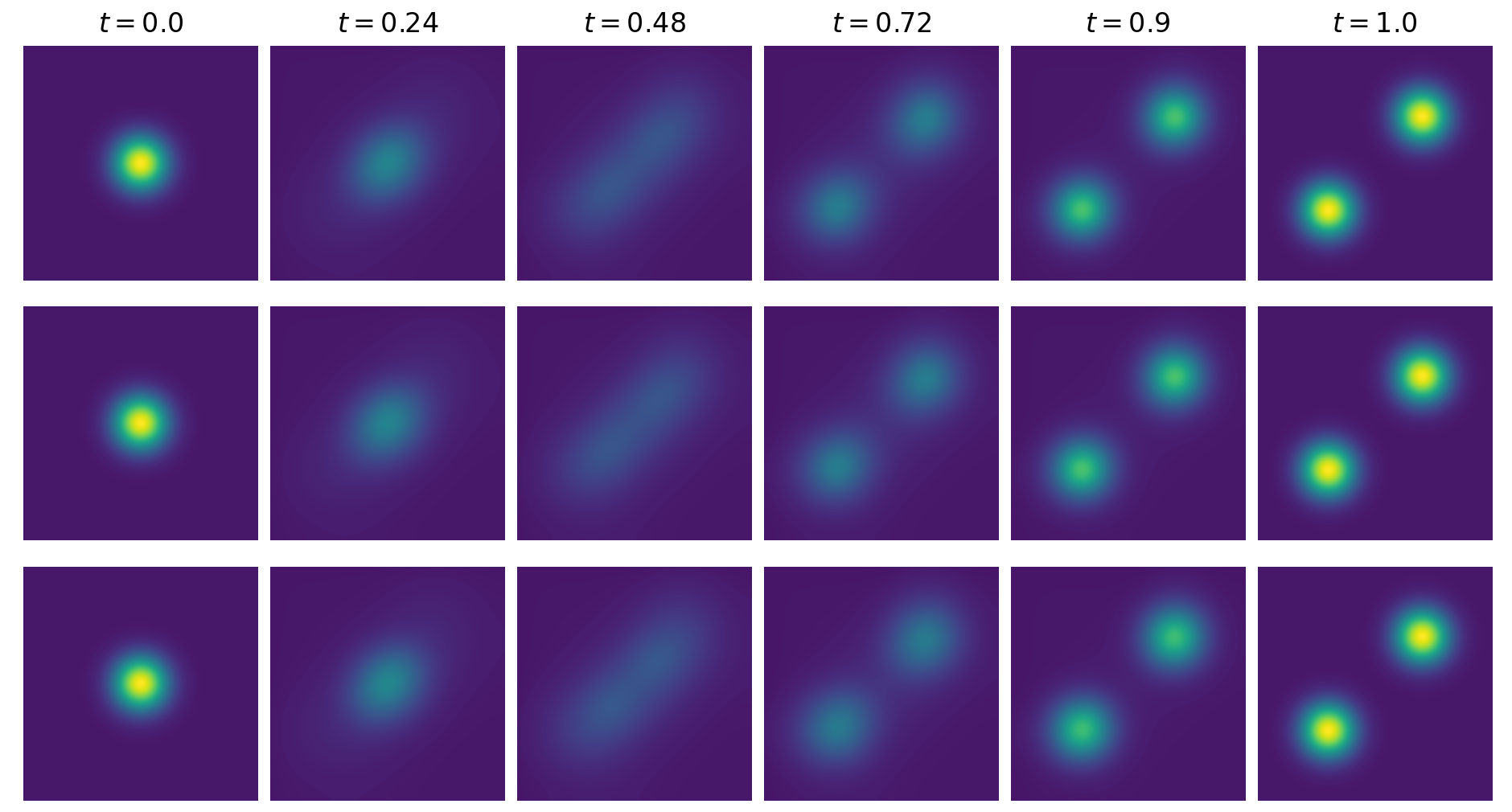}
    \caption{Example 2: Snapshots of the densities at different times $t$. Each column shows the results at $t=0$ (column 1), $t=0.24$ (column 2), $t=0.48$ (column 3), $t=0.72$ (column 4), $t=0.9$ (column 5), $t=1$ (column 6). The first row is from ($V_1(u)=u$, $V_2(u)=\frac{u(u-1)}{\log u}$), the second row is from ($V_1(u)=\sqrt{u}$, $V_2(u)=\frac{u(u-1)}{\log u}$) and the last row is from ($V_1(u)=\sqrt{u}$, $V_2(u)=u$).}
    \label{fig:exp2-imshows}
\end{figure}
\begin{figure}
    \centering
    \includegraphics[width=0.5\linewidth]{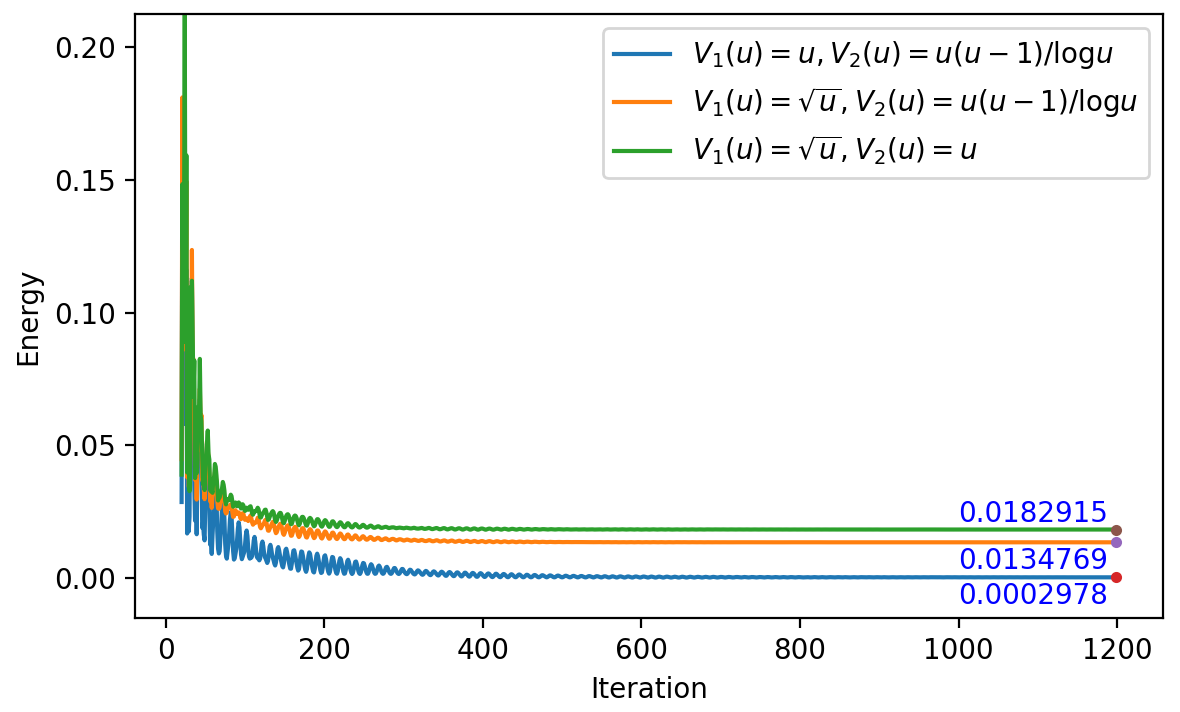}
    \caption{Example 2: The convergence plot from the Algorithm. The x-axis represents iterations and y-axis represents the energy defined in~\eqref{osher}. The blue line represents the solution of $V_1(u)=u$ and $V_2(u)=\frac{u(u-1)}{\log u}$ and converges to $0.00029$. The orange line represents the solution of $V_1(u)=\sqrt{u}$ and $V_2(u)=\frac{u(u-1)}{\log u}$ and converges to $0.0135$. The green line represents the solution of $V_1(u)=\sqrt{u}$ and $V_2(u)=u$ and converges to $0.0183$.}
    \label{fig:exp2-conv}
\end{figure}

\section{Discussion}
In this paper, we applied a novel generalized mean field control for nonlinear reaction-diffusion equations. Several mean-field information variational problems and dynamics in unnormalized density space are presented. In computations, we focus on mean field control problems. And we design primal-dual hybrid gradient methods to compute these generalized optimal transport and mean-field control problems. In future work, we shall design and compute unconditional stable implicit time schemes for reaction-diffusion equations using mean field control problems.

\bibliographystyle{abbrv}

\end{document}